\date{} 
\title{Imaginary geometry III:\\ reversibility of $\SLE_\kappa$ for $\kappa \in (4,8)$}
\author{Jason Miller and Scott Sheffield}
\newif\ifhyper\IfFileExists{hyperref.sty}{\hypertrue}{\hyperfalse}
\ifhyper\usepackage{hyperref}\fi
\newif\ifdraft
\def\note#1/{\ifdraft {\bf [#1]}\fi}
\long\def\comment#1{}
\numberwithin{equation}{section}
\numberwithin{figure}{section}
\newtheorem{theorem}{Theorem}
\numberwithin{theorem}{section}
\newtheorem{lemma}[theorem]{Lemma}
\theoremstyle{remark}
\theoremstyle{remark}\newtheorem{remark}[theorem]{Remark}
\def\@rst #1 #2other{#1}
\newcommand\MR[1]{\relax\ifhmode\unskip\spacefactor3000 \space\fi
  \MRhref{\expandafter\@rst #1 other}{#1}}
\newcommand{\MRhref}[2]{\href{http://www.ams.org/mathscinet-getitem?mr=#1}{MR#2}}
\newcommand{\C}{\mathbf{C}}
\newcommand{\E}{\mathbf{E}}
\newcommand{\N}{\mathbf{N}}
\newcommand{\p}{\mathbf{P}}
\newcommand{\R}{\mathbf{R}}
\newcommand{\h}{\mathbf{H}}
\newcommand{\CC}{\mathcal {C}}
\newcommand{\CD}{\mathcal {D}}
\newcommand{\CI}{\mathcal {I}}
\newcommand{\CK}{\mathcal {K}}
\newcommand{\CR}{\mathcal {R}}
\newcommand{\CS}{\mathcal {S}}
\newcommand{\diam}{{\rm diam}}
\newcommand{\SLE}{{\rm SLE}}
\newcommand{\CLE}{{\rm CLE}}
\newcommand{\strip}{\CS}
\newcommand{\striptop}{\partial_U \CS}
\newcommand{\stripbot}{\partial_L \CS}
\newcommand{\reflection}{\mathcal {R}}
\newcommand{\wh}{\widehat}
\newcommand{\wt}{\widetilde}
\newcommand{\ol}{\overline}
\newcommand{\ul}{\underline}
\def\diam{\mathop{\mathrm{diam}}}
\def\Ito/{It\^o}
\def \E {{\bf E}}
\newcommand{\propFP}{Proposition~7.32} 
\newcommand{\propOB}{Proposition~7.30} 
\begin{document}
\maketitle

\begin{abstract}
Suppose that $D \subseteq \C$ is a Jordan domain and $x,y \in \partial D$ are distinct.  Fix $\kappa \in (4,8)$ and let $\eta$ be an $\SLE_\kappa$ process from $x$ to $y$ in $D$.  We prove that the law of the time-reversal of $\eta$ is, up to reparameterization, an $\SLE_\kappa$ process from $y$ to $x$ in $D$.  More generally, we prove that $\SLE_\kappa(\rho_1;\rho_2)$ processes are reversible if and only if both $\rho_i$ are at least $\kappa/2-4$, which is the critical threshold at or below which such curves are boundary filling.

Our result supplies the missing ingredient needed to show that for all $\kappa \in (4,8)$ the so-called {\em conformal loop ensembles $\CLE_\kappa$} are canonically defined, with almost surely continuous loops.  It also provides an interesting way to couple two Gaussian free fields (with different boundary conditions) so that their difference is piecewise constant and the boundaries between the constant regions are $\SLE_\kappa$ curves.
\end{abstract}

\newpage
\tableofcontents
\newpage

\medbreak {\noindent\bf Acknowledgments.}  We thank David Wilson and Dapeng Zhan for helpful discussions.

\section{Introduction}

Fix $\kappa \in (2,4)$ and write $\kappa' = 16/\kappa \in (4,8)$.  Our main result is the following:

\begin{theorem}
\label{thm::reversible}
Suppose that $D$ is a Jordan domain and let $x,y \in \partial D$ be distinct.  Let $\eta'$ be a chordal $\SLE_{\kappa'}$ process in $D$ from $x$ to $y$.  Then the law of $\eta'$ has time-reversal symmetry.  That is, if $\psi \colon D \to D$ is an anti-conformal map which swaps $x$ and $y$, then the time-reversal of $\psi \circ \eta'$ is equal in law to $\eta'$, up to reparameterization.
\end{theorem}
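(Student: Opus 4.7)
The approach will be to use the imaginary geometry coupling of $\SLE_{\kappa'}$ with the Gaussian free field (GFF) developed in IG I--II, and reduce reversibility of $\eta'$ to the reversibility of GFF flow lines already proved in IG II. By conformal invariance assume $D = \H$, $x = 0$, $y = \infty$, and couple $\eta'$ with a GFF $h$ on $\H$ having appropriate piecewise-constant boundary data, so that $\eta'$ is the counterflow line of $h$ from $0$ to $\infty$ and is a.s.\ determined by $h$. The first geometric input is that the left and right outer boundaries of $\eta'[0,\infty)$ are flow lines $\eta^L,\eta^R$ of $h$ emanating from $\infty$ at angles $+\pi/2$ and $-\pi/2$; these are $\SLE_\kappa(\underline{\rho})$ processes with $\kappa = 16/\kappa' \in (2,4)$, hence reversible by the main theorem of IG II. Coupling a second counterflow line $\tilde\eta'$ from $\infty$ to $0$ with a suitably reflected GFF, one can identify its left/right outer boundaries with these same flow lines $\eta^L,\eta^R$ (now viewed as running from $0$); reversibility of these flow lines then gives that $\eta'$ and $\tilde\eta'$ have equal outer boundaries as subsets of $\H$.

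The second ingredient is the \emph{pocket decomposition}: since $\kappa' \in (4,8)$, the curve $\eta'$ repeatedly touches $\partial\H$ and carves $\H$ into a countable family of bounded pockets, each bounded by an arc of $\eta'$ together with an arc of $\partial\H$. Between consecutive boundary-hitting times, $\eta'$ traverses a single pocket, and, conditionally on the outer boundary flow lines $\eta^L,\eta^R$, this restriction is (conformally) an independent $\SLE_{\kappa'}(\rho_1;\rho_2)$ in that pocket between two marked boundary points, with force weights coming from the flow-line boundary data. This is precisely why the stronger $\SLE_{\kappa'}(\rho_1;\rho_2)$ reversibility asserted in the abstract is essentially forced: the pockets require it. Reversibility of $\eta^L$ and $\eta^R$ pins down the order in which $\tilde\eta'$ must visit the pockets (exactly the reverse of the order for $\eta'$), and combined with reversibility inside each pocket this yields the theorem.

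The main obstacle is closing the inductive loop: inside a pocket the restricted counterflow line has the same combinatorial complexity as the whole curve, so no naive induction on ``something smaller'' terminates. The route I would take is to prove the general $\SLE_{\kappa'}(\rho_1;\rho_2)$ statement in one stroke by a bootstrapping argument that first establishes outer-boundary reversibility (via the flow-line coupling above) for all admissible boundary data, and then leverages that set-level reversibility at every scale, together with a resampling argument inside pockets, to upgrade to oriented reversibility of the full curve. A related technical point is the control of the counterflow line at its boundary-hitting times, where the pocket structure changes; this would likely be the role of propositions like \propOB\ and \propFP\ in the paper. Finally, a key subtlety is the boundary-filling threshold $\rho_i = \kappa/2-4$: at or below this value the counterflow line becomes boundary-filling, the outer-boundary flow lines degenerate, and the coupling argument collapses --- which correctly matches the failure of reversibility at these parameters claimed in the abstract.
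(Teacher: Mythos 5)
Your first step --- reducing to the outer-boundary flow lines, invoking the $\SLE_\kappa(\rho_1;\rho_2)$ reversibility from IG~II, and observing that conditionally on these boundaries the curve is an independent $\SLE_{\kappa'}$-type process in each pocket --- is exactly the paper's Lemma~\ref{lem::reduction}. You also correctly identify the central obstacle: the conditional law in each pocket is an $\SLE_{\kappa'}(\tfrac{\kappa'}{2}-4;\tfrac{\kappa'}{2}-4)$, a boundary-filling process whose own outer boundary is all of the pocket boundary, so the reduction cannot be iterated. But at that point your proposal stops short of the actual content of the proof. The missing idea is the past/future decomposition: for the boundary-filling process one fixes a boundary point $z$ and considers the pair of flow lines describing the outer boundary of the set of points visited before and after the curve hits $z$ (Figure~\ref{fig::duality3}); Lemma~\ref{lem::reflecting_strip} shows that this pair is invariant in law under the anti-conformal map fixing $z$ and swapping the endpoints. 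That lemma is itself the heart of the paper: it is proved by passing to a half-plane version, running the construction from two boundary points $0$ and $1$ simultaneously, and using a resampling Markov chain that couples the configuration with its reflection via the almost sure merging of the two bead sequences (Lemma~\ref{lem::reflecting_half_plane}). One then couples forward and reverse curves so that these past/future boundaries agree at a countable dense set of boundary points, and a separate modulus-of-continuity argument shows the diameters of the remaining components tend to zero, so the coupled curves are time-reversals of one another. Your ``bootstrapping argument that leverages set-level reversibility at every scale, together with a resampling argument inside pockets'' names the right ingredients in spirit, but does not say what is resampled or why any symmetry follows, and that is precisely where the work lies.

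There is also a concrete error in your final remark: you assert that at the threshold $\rho_i = \tfrac{\kappa'}{2}-4$ the curve becomes boundary-filling, ``the coupling argument collapses,'' and that this matches the claimed failure of reversibility there. This puts the threshold on the wrong side. Reversibility \emph{holds} at $\rho_1=\rho_2=\tfrac{\kappa'}{2}-4$ --- indeed the whole proof reduces to this boundary-filling case, which is where all of the new work happens --- and fails only when some $\rho_i$ is strictly below $\tfrac{\kappa'}{2}-4$ (Theorem~\ref{thm::sle_kappa_rho_non_reversible}).
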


Since chordal $\SLE_\kappa$ curves were introduced by Schramm in 1999 \cite{S0}, they have been widely believed and conjectured to be time-reversible for all $\kappa \leq 8$.  For certain $\kappa$ values, this follows from the fact that $\SLE_\kappa$ is a scaling limit of a discrete model that does not distinguish between paths from $x$ to $y$ and paths from $y$ to $x$ ($\kappa = 2$: chordal loop-erased random walk \cite{LSW04}, $\kappa=3$: Ising model spin cluster boundaries \cite{S07}, $\kappa=4$: level lines of the discrete Gaussian free field \cite{SS09}, $\kappa=16/3$: the FK-Ising model cluster boundaries \cite{S07}, $\kappa=6$: critical percolation \cite{S01,CN06}, $\kappa=8$ uniform spanning tree boundary \cite{LSW04}).

The reversibility of chordal $\SLE_{\kappa}$ curves for arbitrary $\kappa \in (0,4]$ was established by Zhan \cite{Z_R_KAPPA} in a landmark work that builds on the commutativity approach proposed by Dub\'edat \cite{DUB_COMM} and by Schramm \cite{S0} in order to show that it is possible to construct a coupling of two $\SLE_\kappa$ curves growing at each other in opposite directions so that their ranges are almost surely equal.  By expanding on this approach, Dub\'edat \cite{DUB_DUAL} and Zhan \cite{Z_R_KAPPA_RHO} extended this result to include one-sided $\SLE_\kappa(\rho)$ processes with $\kappa \in (0,4]$ which do not intersect the boundary (i.e., $\rho \geq \tfrac{\kappa}{2}-2$).  The reversibility of the entire class of chordal $\SLE_\kappa(\rho_1;\rho_2)$ processes for $\rho_1,\rho_2 > -2$ (even when they intersect the boundary) was proved in \cite{MS_IMAG2} using a different approach, based on coupling $\SLE$ with the Gaussian free field \cite{MS_IMAG}.

This work is a sequel to and makes heavy use of the techniques and results from \cite{MS_IMAG, MS_IMAG2}.  We summarize these results in Section~\ref{subsec::imaginary}, so that this work can be read independently.  Of particular importance is a variant of the ``light cone'' characterization of $\SLE_{\kappa'}$ traces given in \cite{MS_IMAG}, which is a refinement of so-called Duplantier duality.  This gives a description of the outer boundary of an $\SLE_{\kappa'}$ process stopped upon hitting the boundary in terms of a certain $\SLE_\kappa$ process.  We will also employ the almost sure continuity of so-called $\SLE_\kappa(\ul{\rho})$ and $\SLE_{\kappa'}(\ul{\rho}')$ traces (see Section~\ref{subsec::imaginary}), even when they interact non-trivially with the boundary \cite[Theorem~1.3]{MS_IMAG}.

Theorem~\ref{thm::reversible} is a special case of a more general theorem which gives the time-reversal symmetry of $\SLE_{\kappa'}(\rho_1;\rho_2)$ processes provided $\rho_1,\rho_2 \geq \tfrac{\kappa'}{2} - 4$.  We remark that the value $\tfrac{\kappa'}{2} - 4$ is the critical threshold at or below which such processes are boundary filling \cite{MS_IMAG}.

\begin{theorem}
\label{thm::sle_kappa_rho_reversible}
Suppose that $D$ is a Jordan domain and let $x,y \in \partial D$ be distinct.  Suppose that $\eta'$ is a chordal $\SLE_{\kappa'}(\rho_1;\rho_2)$ process in $D$ from $x$ to $y$ where the force points are located at $x^-$ and $x^+$.  If $\psi \colon D \to D$ is an anti-conformal map which swaps $x$ and $y$, then the time-reversal of $\psi \circ \eta'$ is an $\SLE_{\kappa'}(\rho_1;\rho_2)$ process from $x$ to $y$, up to reparameterization.
\end{theorem}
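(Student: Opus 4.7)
My plan is to use the imaginary geometry coupling of \cite{MS_IMAG,MS_IMAG2} to reduce this theorem to the reversibility of $\SLE_\kappa(\ul\rho)$ processes for $\kappa = 16/\kappa' \in (2,4)$, which has already been established in \cite{MS_IMAG2}.

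First, I would couple $\eta'$ with a Gaussian free field $h$ on $D$ so that $\eta'$ is the counterflow line of $h$ from $x$ to $y$, with boundary data determined by $(\rho_1,\rho_2)$. Using the light-cone / refined Duplantier duality description from \cite{MS_IMAG} recalled in Section~\ref{subsec::imaginary}, the left and right outer boundaries $\eta_L,\eta_R$ of $\eta'$ are flow lines of $h$ from $y$ to $x$ at specific angles, and hence $\SLE_\kappa(\ul\rho)$ processes with $\kappa \in (2,4)$ and weights explicitly computable from $\rho_1,\rho_2$. By the main theorem of \cite{MS_IMAG2}, the pair $(\eta_L,\eta_R)$ is jointly reversible.

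Next, I would sample an independent chordal $\SLE_{\kappa'}(\rho_1;\rho_2)$ process $\eta''$ from $x$ to $y$ and couple it with its own GFF in the analogous way, identifying its outer boundaries as flow lines. Joint reversibility of $(\eta_L,\eta_R)$ ensures that the unparameterized pair of outer boundaries of the time-reversal of $\psi\circ\eta'$ agrees in law with that of $\eta''$. I would then invoke the flow-line / counterflow-line coupling of \cite{MS_IMAG} to couple the two GFFs so that the two pairs of outer boundaries literally coincide in $\ol D$. Conditional on these common outer boundaries, $D\setminus(\eta_L\cup\eta_R)$ is a countable disjoint union of Jordan pockets $\{P_i\}$, each with two distinguished boundary points (the meeting points of $\eta_L$ and $\eta_R$ on $\partial P_i$), and in each pocket both $\eta''$ and the reversal of $\psi\circ\eta'$ should restrict to $\SLE_{\kappa'}(\rho_1^{P_i};\rho_2^{P_i})$ chords between the marked points, with weights forced by the imaginary geometry boundary data and the visiting order of the pockets forced by the chain structure of the outer boundary.

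The main obstacle is this pocket-by-pocket step: each pocket is a fresh instance of the same reversibility problem, with weights $(\rho_1^{P_i},\rho_2^{P_i})$ that can themselves be at or near the boundary-filling threshold $\kappa'/2-4$, so there is no obvious induction on a single complexity parameter. I expect the resolution is to handle all admissible $(\rho_1,\rho_2)$ simultaneously and to iterate the outer-boundary coupling within each pocket: at each nesting level the complement of the accumulated outer boundaries is cut into subpockets of strictly smaller diameter, so combining almost sure continuity of $\SLE_{\kappa'}(\ul\rho)$ from \cite{MS_IMAG} with the conformal Markov property at each level, the nested family of outer boundaries determines the curve almost surely and pins down its law as $\SLE_{\kappa'}(\rho_1;\rho_2)$ from $x$ to $y$, as required.
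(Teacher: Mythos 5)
Your first step --- identifying the left and right outer boundaries of $\eta'$ as $\SLE_\kappa$ flow lines of the coupled GFF, invoking the reversibility of $\SLE_\kappa(\rho_1;\rho_2)$ from \cite{MS_IMAG2} to match the outer boundaries of the time-reversal of $\psi \circ \eta'$ with those of an independent forward $\SLE_{\kappa'}(\rho_1;\rho_2)$, and then conditioning on the common outer boundaries --- is exactly the paper's Lemma~\ref{lem::reduction}. But the iteration you propose to finish the argument does not make progress, and this is where the real content of the theorem lies. Conditional on the outer boundaries, the law of $\eta'$ in each pocket is an $\SLE_{\kappa'}(\tfrac{\kappa'}{2}-4;\tfrac{\kappa'}{2}-4)$ process between the two marked points (the weights are always these critical values, independently of $\rho_1,\rho_2$), and this process is \emph{boundary filling}: it almost surely hits every point of the pocket boundary. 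Consequently its left and right outer boundaries are just the two boundary arcs of the pocket itself, the region ``between'' them is the whole pocket, and repeating the outer-boundary coupling inside a pocket produces no new flow lines, no subpockets, and no decrease in diameter. Your nesting scheme terminates after one step without determining the curve.

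What is missing is a different decomposition adapted to the boundary-filling case. The paper fixes a boundary point $z$ of the pocket and considers the outer boundary between the \emph{past} and the \emph{future} of $\eta'$ upon hitting $z$; by the duality results of \cite{MS_IMAG} this is a pair of flow lines $(\eta_z^1,\eta_z^2)$ emanating from $z$ and from the point where the first one hits the opposite boundary (Figure~\ref{fig::duality3}), and the key step, Lemma~\ref{lem::reflecting_strip}, is that the law of this pair is invariant under the anti-conformal self-map of the pocket fixing $z$ and swapping the endpoints. This is not a formal consequence of the reversibility of a single flow line: the second path is grown in the complement of the first, and the finite-volume (strip) symmetry is extracted from a half-plane version via a resampling Markov chain run from two boundary points simultaneously (Lemmas~\ref{lem::reflecting_half_plane_simple} and~\ref{lem::reflecting_half_plane}). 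Only after this symmetry is established can one couple the forward and reversed curves so that their past/future boundaries agree at a countable dense family of boundary points, and a separate modulus-of-continuity argument is then needed to show that the diameters of the resulting nested components tend to zero so that the coupling forces one curve to be the time-reversal of the other. None of these steps appears in your proposal, so as written it establishes only the reduction to the critical case, not the theorem.
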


Theorem~\ref{thm::sle_kappa_rho_reversible} has many consequences for $\SLE$.  For example, the {\em conformal loop ensembles} $\CLE_\kappa$ are random collections of loops in a planar domain, defined for all $\kappa \in (8/3,8]$.  Each loop in a $\CLE_\kappa$ looks locally like $\SLE_\kappa$, and the collection of loops can be constructed using a branching form of $\SLE_\kappa(\kappa-6)$ that traces through all of the loops, as described in \cite{SHE_CLE}.  However, there is some arbitrariness in the construction given in \cite{SHE_CLE}: one has to choose a boundary point at which to start this process, and it was not clear in \cite{SHE_CLE} whether the law of the final loop collection was independent of this initial choice; also, each loop is traced from a specific starting/ending point, and it was not clear that the ``loops'' thus constructed were actually continuous at this point.

For $\kappa \in (4,8]$ the continuity and initial-point independence were proved in \cite{SHE_CLE} as results {\em contingent} on the continuity and time-reversal symmetry of $\SLE_\kappa$ and $\SLE_\kappa(\kappa-6)$ processes.  As mentioned above, continuity was recently established in \cite{MS_IMAG}; thus Theorem~\ref{thm::sle_kappa_rho_reversible} implies that the $\CLE_\kappa$ defined in  \cite{SHE_CLE} are almost surely ensembles of continuous loops and that their laws are indeed canonical (independent of the location at which the branching form of $\SLE_\kappa(\kappa-6)$ is started).  We remark that the analogous fact for $\CLE_\kappa$ with $\kappa \in (8/3,4]$ was only recently proved in \cite{SHE_WER_CLE}.  In that case, the continuity and initial-point independence are established by showing that the branching $\SLE_\kappa(\kappa-6)$ construction of $\CLE_\kappa$ is equivalent to the {\em loop-soup-cluster-boundary} construction proposed by Werner.

Our final result is the non-reversibility of $\SLE_{\kappa'}(\rho_1;\rho_2)$ processes when either $\rho_1 < \tfrac{\kappa'}{2}-4$ or $\rho_2 < \tfrac{\kappa'}{2}-4$:

\begin{theorem}
\label{thm::sle_kappa_rho_non_reversible}
Suppose that $D$ is a Jordan domain and let $x,y \in \partial D$ be distinct.  Suppose that $\eta'$ is a chordal $\SLE_{\kappa'}(\rho_1;\rho_2)$ process in $D$ from $x$ to $y$.  Let $\psi \colon D \to D$ be an anti-conformal map which swaps $x$ and $y$.  If either $\rho_1 < \tfrac{\kappa'}{2}-4$ or $\rho_2 < \tfrac{\kappa'}{2}-4$, then the law of the time-reversal of $\psi(\eta')$ is not an $\SLE_{\kappa'}(\ul{\rho})$ process for any collection of weights $\ul{\rho}$.
\end{theorem}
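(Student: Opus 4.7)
I would argue by contradiction. Without loss of generality assume $\rho_1 < \tfrac{\kappa'}{2}-4$; by \cite{MS_IMAG} (recalled in the excerpt) this forces $\eta'$ almost surely to contain the entire counterclockwise arc $A_L$ of $\partial D$ from $x$ to $y$. Write $\widetilde\eta'$ for the time-reversal of $\psi\circ\eta'$, a continuous curve from $x$ to $y$ whose range equals $\psi(\mathrm{range}(\eta'))$. Because $\psi$ is orientation-reversing, $\psi(A_L)$ is the clockwise arc $A_R$ from $x$ to $y$, which is therefore contained in $\widetilde\eta'$. Assume for contradiction that $\widetilde\eta'$ has the law of an $\SLE_{\kappa'}(\ul{\rho}')$ process for some weight vector $\ul{\rho}'$.

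The first step is to extract rigidity constraints on $\ul{\rho}'$. Since $\widetilde\eta'$ contains the whole arc $A_R$, some cumulative partial sum of the right-side weights of $\ul{\rho}'$ must be $\leq \tfrac{\kappa'}{2}-4$ (the boundary-filling threshold, via the classification in \cite{MS_IMAG}). Matching the Hausdorff dimensions of the boundary intersection on the non-filled side (the left side for $\widetilde\eta'$ and the right side for $\eta'$; equal because $\psi$ is locally bi-Lipschitz on $\partial D\setminus\{x,y\}$), together with orientation bookkeeping under $\psi$, pins down the other side of $\ul{\rho}'$: the relevant left-weight of $\ul{\rho}'$ is forced by $\rho_2$ through the known dimension formulae for $\SLE_{\kappa'}(\ul{\rho})$ boundary intersections.

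The main step is to derive a contradiction by comparing the near-start and near-end behaviors of $\widetilde\eta'$ on its filled side $A_R$. For $\widetilde\eta'$ treated as $\SLE_{\kappa'}(\ul{\rho}')$, the domain Markov property forces the short-time boundary-filling rate near $x$ and the long-time filling rate near $y$ on $A_R$ to be explicit functions of $\kappa'$ and $\ul{\rho}'$ (the former of the local weights near $x$, the latter of the cumulative swallowed weights as the capacity goes to infinity). On the other hand, these two rates are also determined, via $\psi$ and time-reversal, by the near-end and near-start rates of $\eta'$ on $A_L$; for $\eta'$ these are explicit Bessel-process quantities governed by $\rho_1,\rho_2,$ and $\kappa'$ via the SDE for $W_t-V_t^-$. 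The plan is to show that the two systems of constraints on $\ul{\rho}'$ obtained in this way are overdetermined, and in fact inconsistent for $\rho_1<\tfrac{\kappa'}{2}-4$, by exploiting the explicit $\rho_1$-dependent short-time scaling of $\eta'$ at $x$ (which translates to near-endpoint scaling of $\widetilde\eta'$) against the finite-dimensional family of near-endpoint scalings available to $\SLE_{\kappa'}(\ul{\rho}')$.

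The main obstacle will be the explicit computation of the near-endpoint filling rate for $\SLE_{\kappa'}(\ul{\rho}')$ across arbitrary weight vectors. The near-start rate is a standard Bessel-process calculation, but the near-end analogue requires understanding the asymptotic configuration of the Loewner driving function and the evolving force points as $t\to\infty$; this is where the refined outer-boundary/light-cone description of $\SLE_{\kappa'}$ traces stopped on first hitting $\partial D$, recalled in the excerpt from \cite{MS_IMAG}, and the broader imaginary-geometry machinery of \cite{MS_IMAG,MS_IMAG2} will have to be invoked. The technical core is to show that the family of achievable near-end rates is strictly smaller than what the reversal of $\eta'$ forces on $\widetilde\eta'$, uniformly in the choice of $\ul{\rho}'$ compatible with the rigidity constraints from the previous step.
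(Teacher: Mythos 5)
Your proposal is a strategy outline rather than a proof: the decisive step --- that the ``near-start'' and ``near-end'' constraints on $\ul{\rho}'$ are jointly inconsistent --- is announced (``The plan is to show\dots'', ``The technical core is to show\dots'') but never carried out, and the quantities it relies on (a ``short-time boundary-filling rate'' near $x$, a ``long-time filling rate'' near $y$, dimension-matching that ``pins down'' the left weights) are neither defined nor computed. Beyond incompleteness, the approach has a structural weakness: the theorem must exclude $\SLE_{\kappa'}(\ul{\rho})$ for \emph{arbitrary} weight vectors $\ul{\rho}$, which may contain any number of force points, so matching two scalar endpoint asymptotics yields far too few constraints to rule out the whole family. (Also, your dimension-matching step presupposes a non-filled side, which fails when both $\rho_1,\rho_2<\tfrac{\kappa'}{2}-4$, and bi-Lipschitz regularity of $\psi$ on the boundary of a general Jordan domain is not available.)

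The paper's argument is local at a \emph{typical interior} point $z$ of the filled boundary arc rather than at the endpoints, which is what makes it rigid. For any boundary-filling $\SLE_{\kappa'}(\ul{\rho})$, the outer boundaries of the past and the future of $z$ form a pair of flow lines $\{\eta_z^1,\eta_z^2\}$ as in Figure~\ref{fig::duality3}; zooming in at $z$ (absolute continuity away from the force points plus scaling) reduces this pair to the half-plane configuration of Figure~\ref{fig::half_plane_setup} with constant boundary data $c$ determined solely by the cumulative weight governing the arc containing $z$. Lemma~\ref{lem::reflecting_half_plane_simple} shows the law of that pair is invariant under the reflection $\reflection_z$ \emph{only} when $c=-\lambda'$, i.e.\ only at the critical weight $\tfrac{\kappa'}{2}-4$; for $c\neq -\lambda'$ the two paths meet the two boundary rays at unequal ``angles.'' Since time-reversal exchanges past and future at $z$, i.e.\ reflects this local picture and hence reverses the asymmetry, the reversed curve exhibits a local structure at $z$ that no $\SLE_{\kappa'}(\ul{\rho})$ can produce (Figure~\ref{fig::non_reversible_strip}). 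If you want to salvage your endpoint-based plan you would at minimum need to replace the two endpoint constraints by this interior-point constraint, at which point you have reproduced the paper's proof.
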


\begin{figure}[h!]
\begin{center}
\includegraphics[scale=0.85]{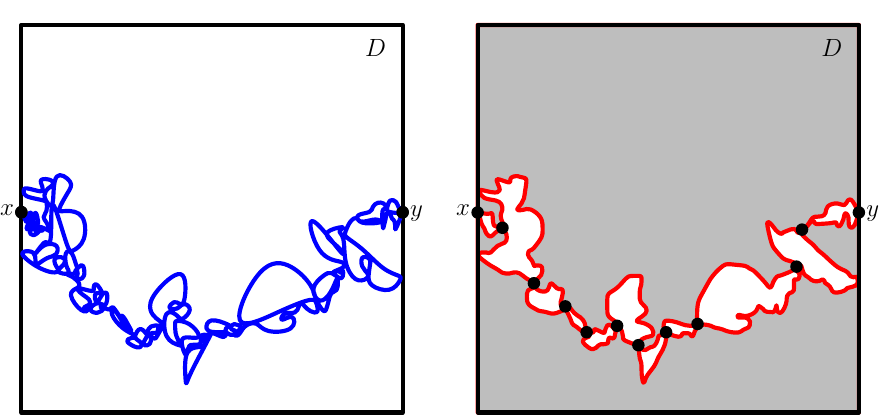}
\end{center}
\caption{\label{fig::outerboundaryreversibility}
The curve on the left represents an $\SLE_{\kappa'}(\rho_1; \rho_2)$ where $\kappa' \in (4,8)$ and $\rho_1, \rho_2 \geq \tfrac{\kappa'}{2}-4$.  It was shown in \cite[Theorem~1.4]{MS_IMAG} and \cite[Theorem~1.1]{MS_IMAG2} that the law of the outer boundary of this path (the pair of red curves from $x$ to $y$ on the right) has time-reversal symmetry; thus one can couple an $\SLE_{\kappa'}(\rho_1; \rho_2)$ path $\eta'$ from $x$ to $y$ with an $\SLE_{\kappa'}(\rho_2;\rho_1)$ path $\gamma'$ from $y$ to $x$ in such a way that their boundaries almost surely agree.  Moreover, it was also shown in \cite[\propOB]{MS_IMAG} that {\em given} these outer boundaries, the conditional law of the path within each of the white ``bubbles'' shown on the right (i.e., each of the countably many components of the complement of the boundary that lies between the two boundary paths) is given by an independent $\SLE_{\kappa'}(\tfrac{\kappa'}{2}-4; \tfrac{\kappa'}{2}-4)$ process from its first to its last endpoint (illustrated by the black dots on the right).  Thus, if $\SLE_{\kappa'}(\tfrac{\kappa'}{2}-4; \tfrac{\kappa'}{2}-4)$ has time-reversal symmetry, then we can couple $\eta'$ and $\gamma'$ so that they agree (up to time-reversal) within each bubble as well.}
\end{figure}

\subsection*{Outline}

The remainder of this article is structured as follows.  In Section~\ref{sec::preliminaries}, we will give a brief overview of both $\SLE$ and the so-called imaginary geometry of the Gaussian free field.  The latter is a non-technical summary of the results proved in \cite{MS_IMAG} which are needed for this article.  This section is similar to \cite[Section 2]{MS_IMAG2}.  In Section~\ref{sec::proofs}, we will prove Theorems~\ref{thm::reversible}-\ref{thm::sle_kappa_rho_non_reversible}.  Finally, in Section~\ref{sec::couplings} we briefly explain how these theorems can be used to construct couplings of different Gaussian free field instances with different boundary conditions; as an application, we compute a simple formula for the probability that a given point lies to the left of an $\SLE_{\kappa'}(\tfrac{\kappa'}{2}-4; \tfrac{\kappa'}{2}-4)$ curve.  (The analogous result for $\SLE_{\kappa'}$, computed by Schramm in \cite{MR1871700}, does not have such a simple form.)

As Figure~\ref{fig::outerboundaryreversibility} illustrates, when $\kappa' \in (4,8)$ and $\rho_1, \rho_2 \geq \tfrac{\kappa'}{2}-4$, the results obtained in \cite{MS_IMAG, MS_IMAG2} reduce the problem of showing time-reversal symmetry to the special case that $\eta'$ is an $\SLE_{\kappa'}(\tfrac{\kappa'}{2}-4;\tfrac{\kappa'}{2}-4)$, which is a random curve that hits every point on the entire boundary almost surely.  The second step is to pick some point $z$ on the boundary of $D$ and consider the outer boundaries of the {\em past} and {\em future} of $\eta'$ upon hitting $z$ --- i.e., the outer boundary of the set of points visited by $\eta'$ before hitting $z$ and the outer boundary of the set of points visited by $\eta'$ after hitting $z$, as illustrated in Figure~\ref{fig::iterativesplit}.  Lemma~\ref{lem::reflecting_strip} shows that the law of this pair of paths is invariant under the anti-conformal map $D \to D$ that swaps $x$ and $y$ while fixing $z$.

The proof of Lemma~\ref{lem::reflecting_strip} is the heart of the argument.  It makes use of Gaussian free field machinery in a rather picturesque way that avoids the need for extensive calculations.  Roughly speaking, we will first consider an ``infinite volume limit'' obtained by ``zooming in'' near the point $z$ in Figure~\ref{fig::iterativesplit}.  In this limit, we find a coupled pair of $\SLE_\kappa(\rho_1; \rho_2)$ paths from $z \in \partial \h$ to $\infty$ in $\h$ and \cite[Theorem~1.1]{MS_IMAG2} implies that the law of the pair of paths is invariant under reflection about the vertical axis through $z$.  By employing a second trick (involving a second pair of paths started at a second point in $\partial \h$) we are able to recover the finite volume symmetry from the infinite volume symmetry.

\begin{figure}[h!]
\begin{center}
\includegraphics[scale=0.85]{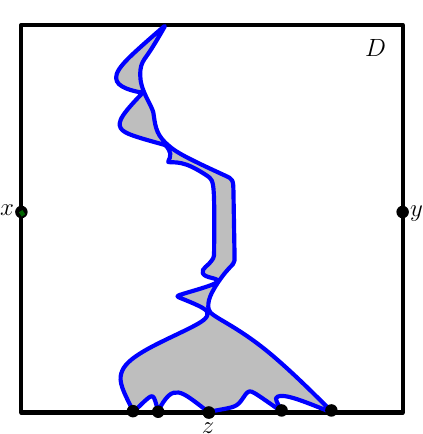}
\end{center}
\caption{\label{fig::iterativesplit}
Let $\eta'$ be an $\SLE_{\kappa'}(\tfrac{\kappa'}{2}-4;\tfrac{\kappa'}{2}-4)$ (which almost surely hits every point on $\partial D$) and let $z$ be a fixed point on $\partial D$.  The left of the two blue curves shown (starting at $z$, ending at the top of the box) is the outer boundary of the ``past of $z$'' (i.e., the set of all points $\eta'$ disconnects from $y$ before $z$ is hit).  The right blue curve with the same endpoints is the outer boundary of the ``future of $z$'' (i.e., the set of all points the time-reversal of $\eta'$ disconnects from $x$ before $z$ is hit).    Lemma~\ref{lem::reflecting_strip} shows that the law of this pair of paths is invariant under the anti-conformal map $D \to D$ that swaps $x$ and $y$ while fixing $z$.  Thus we can couple forward and reverse $\SLE_{\kappa'}(\tfrac{\kappa'}{2}-4;\tfrac{\kappa'}{2}-4)$ processes so that these boundaries are the same for both of them.  The conditional law of $\eta'$ within each of the white bubbles is given by an independent $\SLE_{\kappa'}(\tfrac{\kappa'}{2}-4;\tfrac{\kappa'}{2}-4)$ process \cite[Proposition~7.24]{MS_IMAG}, so we can iterate this construction.}
\end{figure}

Once we have Lemma~\ref{lem::reflecting_strip}, we can couple forward and reverse $\SLE_{\kappa'}(\tfrac{\kappa'}{2}-4;\tfrac{\kappa'}{2}-4)$ processes so that their past and future upon hitting $z$ have the same outer boundaries.  Moreover, given the information in Figure~\ref{fig::iterativesplit}, \cite[\propFP]{MS_IMAG} implies that the conditional law of $\eta'$ within each of these bubbles is again an independent $\SLE_{\kappa'}(\tfrac{\kappa'}{2}-4;\tfrac{\kappa'}{2}-4)$ process.  (This is a consequence of the ``light cone'' characterization of $\SLE_{\kappa'}$ processes established in \cite{MS_IMAG}.)  Thus we can pick any point on the boundary of a bubble and further couple so that the past and future of that point (within the bubble) have the same boundary.  Iterating this procedure a countably infinite number of times allows us to couple two $\SLE_{\kappa'}(\tfrac{\kappa'}{2}-4;\tfrac{\kappa'}{2}-4)$
curves so that one is almost surely the time-reversal of the other, thereby proving  Theorem~\ref{thm::sle_kappa_rho_reversible}.  The non-reversibility when one of $\rho_1$ or $\rho_2$ is less than $\tfrac{\kappa'}{2}-4$ is shown by checking that the analog of Figure~\ref{fig::iterativesplit} is {\em not} invariant under anti-conformal maps (fixing $z$, swapping $x$ and $y$) in this case.

\section{Preliminaries}
\label{sec::preliminaries}

The purpose of this section is to review the basic properties of $\SLE_\kappa(\ul{\rho}^L;\ul{\rho}^R)$ processes in addition to giving a non-technical overview of the so-called imaginary geometry of the Gaussian free field.  The latter is a mechanism for constructing couplings of many $\SLE_\kappa(\ul{\rho}^L;\ul{\rho}^R)$ strands in such a way that it is easy to compute the conditional law of one of the curves given the realization of the others \cite{MS_IMAG}.

\subsection{$\SLE_\kappa(\rho)$ Processes}

$\SLE_\kappa$ is a one-parameter family of conformally invariant random curves, introduced by Oded Schramm in \cite{S0} as a candidate for (and later proved to be) the scaling limit of loop erased random walk \cite{LSW04} and the interfaces in critical percolation \cite{S01, CN06}.  Schramm's curves have been shown so far also to arise as the scaling limit of the macroscopic interfaces in several other models from statistical physics: \cite{SS09,S07,CS10U,SS05,MillerSLE}.  More detailed introductions to $\SLE$ can be found in many excellent survey articles of the subject, e.g., \cite{W03, LAW05}.

An $\SLE_\kappa$ in $\h$ from $0$ to $\infty$ is defined by the random family of conformal maps $g_t$ obtained by solving the Loewner ODE
\begin{equation}
\label{eqn::loewner_ode}
\partial_t g_t(z) = \frac{2}{g_t(z) - W_t},\ \ \ g_0(z) = z
\end{equation}
where $W = \sqrt{\kappa} B$ and $B$ is a standard Brownian motion.  Write $K_t := \{z \in \h: \tau(z) \leq t \}$.  Then $g_t$ is a conformal map from $\h_t := \h \setminus K_t$ to $\h$ satisfying $\lim_{|z| \to \infty} |g_t(z) - z| = 0$.

Rohde and Schramm showed that there almost surely exists a curve $\eta$ (the so-called $\SLE$ \emph{trace}) such that for each $t \geq 0$ the domain $\h_t$ is the unbounded connected component of $\h \setminus \eta([0,t])$, in which case the (necessarily simply connected and closed) set $K_t$ is called the ``filling'' of $\eta([0,t])$ \cite{RS05}.  An $\SLE_\kappa$ connecting boundary points $x$ and $y$ of an arbitrary simply connected Jordan domain can be constructed as the image of an $\SLE_\kappa$ on $\h$ under a conformal transformation $\psi \colon \h \to D$ sending $0$ to $x$ and $\infty$ to $y$.  (The choice of $\psi$ does not affect the law of this image path, since the law of $\SLE_\kappa$ on $\h$ is scale invariant.)  $\SLE_\kappa$ is characterized by the fact that it satisfies the domain Markov property and is invariant under conformal transformations.

$\SLE_\kappa(\ul{\rho}^L;\ul{\rho}^R)$ is the stochastic process one obtains by solving~\eqref{eqn::loewner_ode} where the driving function $W$ is taken to be the solution to the SDE
\begin{equation}
\label{eqn::sle_kappa_rho_eqn}
\begin{split}
dW_t &= \sqrt{\kappa} dB_t + \sum_{q \in \{L,R\}} \sum_{i} \frac{\rho^{i,q}}{W_t - V_t^{i,q}} dt \\
dV_t^{i,q} &= \frac{2}{V_t^{i,q} - W_t} dt,\quad V_0^{i,q} = x^{i,q}.
\end{split}
\end{equation}
Like $\SLE_\kappa$, the $\SLE_\kappa(\ul{\rho}^L;\ul{\rho}^R)$ processes arise in a variety of natural contexts.  The existence and uniqueness of solutions to~\eqref{eqn::sle_kappa_rho_eqn} is discussed in \cite[Section 2]{MS_IMAG}.  In particular, it is shown that there is a unique solution to~\eqref{eqn::sle_kappa_rho_eqn} until the first time $t$ that $W_t = V_t^{j,q}$ where $\sum_{i=1}^j \rho^{i,q} \leq -2$ for $q \in \{L,R\}$ --- we call this time the {\bf continuation threshold} (see \cite[Section 2]{MS_IMAG}).  In particular, if $\sum_{i=1}^j \rho^{i,q} > -2$ for all $1 \leq j \leq |\ul{\rho}^q|$ for $q \in \{L,R\}$, then~\eqref{eqn::sle_kappa_rho_eqn} has a unique solution for all times $t$.  This even holds when one or both of the $x^{1,q}$ are zero.  The almost sure continuity of the $\SLE_\kappa(\ul{\rho}^L;\ul{\rho}^R)$ trace is also proved in \cite[Theorem~1.3]{MS_IMAG}.

\subsection{Imaginary Geometry of the Gaussian Free Field}
\label{subsec::imaginary}

We will now give an overview of the so-called \emph{imaginary geometry} of the Gaussian free field (GFF).  In this article, this serves as a tool for constructing couplings of multiple $\SLE$ strands and provides a simple calculus for computing the conditional law of one of the strands given the realization of the others \cite{MS_IMAG}.  The purpose of this overview is to explain just enough of the theory so that this article may read and understood independently of \cite{MS_IMAG}, however we refer the reader interested in proofs of the statements we make here to \cite{MS_IMAG}.  We begin by fixing a domain $D \subseteq \C$ with smooth boundary and letting $C_0^\infty(D)$ denote the space of compactly supported $C^\infty$ functions on $D$.  For $f,g \in C_0^\infty(D)$, we let
\[ (f,g)_\nabla := \frac{1}{2\pi} \int_D \nabla f (x) \cdot \nabla g(x)dx\]
denote the Dirichlet inner product of $f$ and $g$ where $dx$ is the Lebesgue measure on $D$.  Let $H(D)$ be the Hilbert space closure of $C_0^\infty(D)$ under $(\cdot,\cdot)_\nabla$.  The continuum Gaussian free field $h$ (with zero boundary conditions) is the so-called standard Gaussian on $H(D)$.  It is given formally as a random linear combination
\begin{equation}
\label{eqn::gff_definition}
 h = \sum_n \alpha_n f_n,
\end{equation}
where $(\alpha_n)$ are i.i.d.\ $N(0,1)$ and $(f_n)$ is an orthonormal basis of $H(D)$.  The GFF with non-zero boundary data $\psi$ is given by adding the harmonic extension of $\psi$ to a zero-boundary GFF $h$.

The GFF is a two-dimensional-time analog of Brownian motion.  Just as Brownian motion can be realized as the scaling limit of many random lattice walks, the GFF arises as the scaling limit of many random (real or integer valued) functions on two dimensional lattices \cite{BAD96, KEN01, NS97, RV08, MillerGLCLT}.  The GFF can be used to generate various kinds of random geometric structures, in particular the imaginary geometry discussed here \cite{2010arXiv1012.4797S, MS_IMAG}.  This corresponds to considering $e^{ih/\chi}$, for a fixed constant $\chi >0$.  Informally, the ``rays'' of the imaginary geometry are flow lines of the complex vector field $e^{i(h/\chi+\theta)}$, i.e., solutions to the ODE \begin{equation}
\eta'(t) = e^{i \left(h(\eta(t))+\theta\right)} \text{ for } t > 0,
\end{equation}
for given values of $\eta(0)$ and $\theta$.

\begin{figure}[h!]
\begin{center}
\includegraphics[scale=0.85]{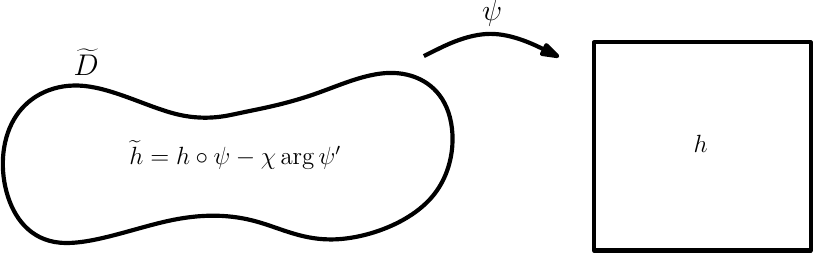}
\caption{\label{fig::coordinatechange} The set of flow lines in $\wt{D}$ is the pullback via a conformal map $\psi$ of the set of flow lines in $D$ provided $h$ is transformed to a new function $\wt{h}$ in the manner shown.}
\end{center}
\end{figure}

\begin{figure}[h!]
\begin{center}
\subfigure{\includegraphics[scale=0.85]{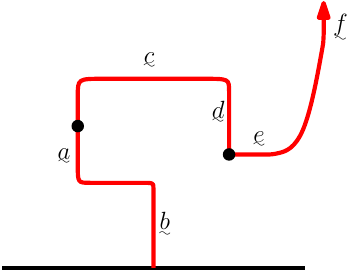}}
\hspace{0.01\textwidth}
\subfigure{\includegraphics[scale=0.85]{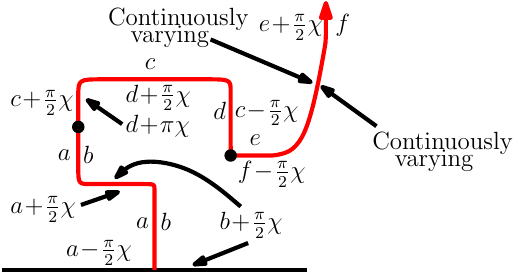}}
\caption{\label{fig::winding}  We will often make use of the notation depicted on the left hand side to indicate boundary values for Gaussian free fields.  Specifically, we will delineate the boundary $\partial D$ of a Jordan domain $D$ with black dots.  On each arc $L$ of $\partial D$ which lies between a pair of black dots, we will draw either a horizontal or vertical segment $L_0$ and label it with $\uwave{x}$.  This means that the boundary data on $L_0$ is given by $x$.  Whenever $L$ makes a quarter turn to the right, the height goes down by $\tfrac{\pi}{2} \chi$ and whenever $L$ makes a quarter turn to the left, the height goes up by $\tfrac{\pi}{2} \chi$.  More generally, if $L$ makes a turn which is not necessarily at a right angle, the boundary data is given by $\chi$ times the winding of $L$ relative to $L_0$.  If we just write $x$ next to a horizontal or vertical segment, we mean to indicate the boundary data just at that segment and nowhere else.  The right side above has exactly the same meaning as the left side, but the boundary data is spelled out explicitly everywhere.  Even when the curve has a fractal, non-smooth structure, the {\em harmonic extension} of the boundary values still makes sense, since one can transform the figure via the rule in Figure~\ref{fig::coordinatechange} to a half plane with piecewise constant boundary conditions. }
\end{center}
\end{figure}

A brief overview of imaginary geometry (as defined for general functions $h$) appears in \cite{2010arXiv1012.4797S}, where the rays are interpreted as geodesics of a variant of the Levi-Civita connection associated with Liouville quantum gravity.  One can interpret the $e^{ih}$ direction as ``north'' and the $e^{i(h + \pi/2)}$ direction as ``west'', etc.  Then $h$ determines a way of assigning a set of compass directions to every point in the domain, and a ray is determined by an initial point and a direction.  When $h$ is constant, the rays correspond to rays in ordinary Euclidean geometry.  For more general continuous $h$, one can still show that when three rays form a triangle, the sum of the angles is always $\pi$ \cite{2010arXiv1012.4797S}.

To build these rays, one begins by constructing explicit couplings of $h$ with variants of $\SLE$ and showing that these couplings have certain properties.  Namely, if one conditions on part of the curve, then the conditional law of $h$ is that of a GFF in the complement of the curve with certain boundary conditions (see Figure~\ref{fig::conditional_boundary_data}).  Examples of these couplings appear in \cite{She_SLE_lectures, SchrammShe10, DUB_PART, 2010arXiv1012.4797S} as well as variants in \cite{MakarovSmirnov09,HagendorfBauerBernard10,IzyurovKytola10}.  The next step is to show that in these couplings the path is almost surely \emph{determined by the field} so that we can really interpret the ray as a path-valued function of the field.  This step is carried out in some generality in \cite{DUB_PART, 2010arXiv1012.4797S, MS_IMAG}.

If $h$ is a smooth function, $\eta$ a flow line of $e^{ih/\chi}$, and $\psi \colon \wt D \to D$ a conformal transformation, then by the chain rule, $\psi^{-1}(\eta)$ is a flow line of $h \circ \psi - \chi \arg \psi'$, as in Figure~\ref{fig::coordinatechange}. With this in mind, we define an {\bf imaginary surface} to be an equivalence class of pairs $(D,h)$ under the equivalence relation
\begin{equation}
\label{eqn::ac_eq_rel}
 (D,h) \rightarrow (\psi^{-1}(D), h \circ \psi - \chi \arg \psi') = (\wt{D},\wt{h}).
\end{equation}
We interpret $\psi$ as a (conformal) {\em coordinate change} of the imaginary surface.  In what follows, we will generally take $D$ to be the upper half plane, but one can map the flow lines defined there to other domains using~\eqref{eqn::ac_eq_rel}.

\begin{figure}[h!]
\begin{center}
\includegraphics[scale=0.85]{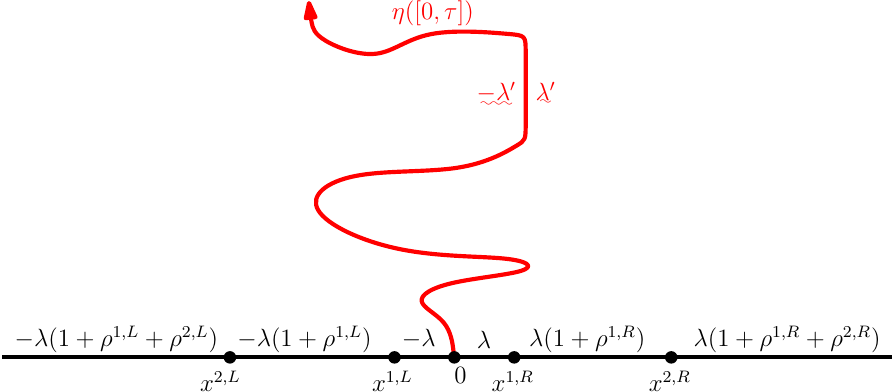}
\caption{\label{fig::conditional_boundary_data}  Suppose that $h$ is a GFF on $\h$ with the boundary data depicted above.  Then the flow line $\eta$ of $h$ starting from $0$ is an $\SLE_\kappa(\ul{\rho}^L;\ul{\rho}^R)$ curve in $\h$ where $|\ul{\rho}^L| = |\ul{\rho}^R| = 2$.  Conditional on $\eta([0,\tau])$ for any $\eta$ stopping time $\tau$, $h$ is equal in distribution to a GFF on $\h \setminus \eta([0,\tau])$ with the boundary data on $\eta([0,\tau])$ depicted above (the notation $\uwave{a}$ which appears adjacent to $\eta([0,\tau])$ is explained in some detail in Figure~\ref{fig::winding}).  It is also possible to couple $\eta' \sim\SLE_{\kappa'}(\ul{\rho}^L;\ul{\rho}^R)$ for $\kappa' > 4$ with $h$ and the boundary data takes on the same form.  The difference is in the interpretation --- $\eta'$ is not a flow line of $h$, but for each time $\tau$, the left and right outer boundaries of the set $\eta'([0,\tau])$, traced starting from $\eta(\tau)$, are flow lines of $h$ with appropriate angles.}
\end{center}
\end{figure}

We assume throughout the rest of this section that $\kappa \in (0,4)$ so that $\kappa' := 16/\kappa \in (4,\infty)$.  When following the illustrations, it will be useful to keep in mind a few definitions and identities:
\begin{equation} \label{eqn::deflist} \lambda := \frac{\pi}{\sqrt \kappa}, \,\,\,\,\,\,\,\,\lambda' := \frac{\pi}{\sqrt{16/\kappa}} = \frac{\pi \sqrt{\kappa}}{4} = \frac{\kappa}{4} \lambda < \lambda, \,\,\,\,\,\,\,\, \chi := \frac{2}{\sqrt \kappa} - \frac{\sqrt \kappa}{2} > 0\end{equation}
\begin{equation} \label{eqn::fullrevolution} 2 \pi \chi = 4(\lambda-\lambda'), \,\,\,\,\,\,\,\,\,\,\,\lambda' = \lambda - \frac{\pi}{2} \chi \end{equation}
\begin{equation} \label{eqn::fullrevolutionrho}
2 \pi \chi = (4-\kappa)\lambda = (\kappa'-4)\lambda'.
\end{equation}

\begin{figure}[h!]
\begin{center}
\includegraphics[scale=0.85]{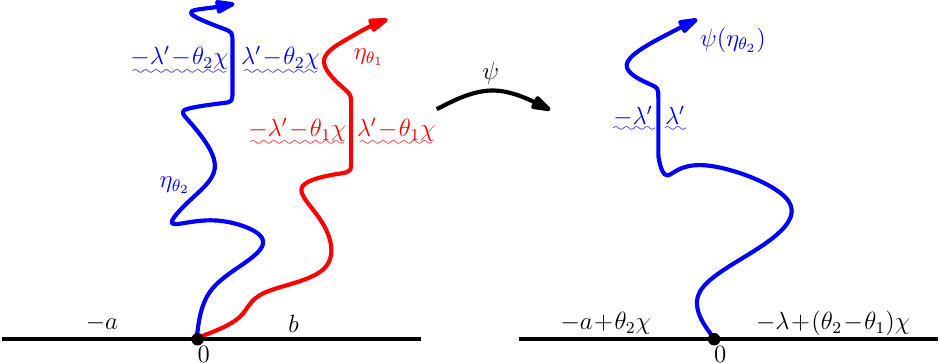}
\caption{\label{fig::monotonicity}  Suppose that $h$ is a GFF on $\h$ with the boundary data depicted above.  For each $\theta \in \R$, let $\eta_\theta$ be the flow line of the GFF $h+\theta \chi$.  This corresponds to setting the angle of $\eta_\theta$ to be $\theta$.  Just as if $h$ were a smooth function, if $\theta_1 < \theta_2$ then $\eta_{\theta_1}$ lies to the right of $\eta_{\theta_2}$ \cite[Theorem~1.5]{MS_IMAG}.  The conditional law of $h$ given $\eta_{\theta_1}$ and $\eta_{\theta_2}$ is a GFF on $\h \setminus \cup_{i=1}^2 \eta_{\theta_i}$ whose boundary data is shown above \cite[Proposition~6.1]{MS_IMAG}.  By applying a conformal mapping and using the transformation rule~\eqref{eqn::ac_eq_rel}, we can compute the conditional law of $\eta_{\theta_1}$ given the realization of $\eta_{\theta_2}$ and vice-versa.  That is, $\eta_{\theta_2}$ given $\eta_{\theta_1}$ is an $\SLE_\kappa((a-\theta_2 \chi)/\lambda -1; (\theta_2-\theta_1)\chi/\lambda-2)$ process independently in each of the connected components of $\h \setminus \eta_{\theta_1}$ which lie to the left of $\eta_{\theta_1}$.  Moreover, $\eta_{\theta_1}$ given $\eta_{\theta_2}$ is an $\SLE_\kappa((\theta_2-\theta_1) \chi/\lambda -2;(b+\theta_1\chi)/\lambda-1)$ independently in each of the connected components of $\h \setminus \eta_{\theta_2}$ which lie to the right of $\eta_{\theta_2}$ \cite[Section 7.1]{MS_IMAG}.}
\end{center}
\end{figure}

The boundary data one associates with the GFF on $\h$ so that its flow line from $0$ to $\infty$ is an $\SLE_\kappa(\ul{\rho}^L;\ul{\rho}^R)$ process with force points located at $\ul{x} = (\ul{x}^L,\ul{x}^R)$ is
\begin{align}
 -&\lambda\left( 1 + \sum_{i=1}^j \rho^{i,L}\right) \text{ for } x \in [x^{j+1,L},x^{j,L}) \text{ and }\\
 &\lambda\left( 1 + \sum_{i=1}^j \rho^{i,R}\right) \text{ for } x \in [x^{j,R},x^{j+1,R})
\end{align}
This is depicted in Figure~\ref{fig::conditional_boundary_data} in the special case that $|\ul{\rho}^L| = |\ul{\rho}^R| = 2$.  As we explained earlier, for any $\eta$ stopping time $\tau$, the law of $h$ conditional on $\eta([0,\tau])$ is a GFF in $\h \setminus \eta([0,\tau])$.  The boundary data of the conditional field agrees with that of $h$ on $\partial \h$.  On the right side of $\eta([0,\tau])$, it is $\lambda' + \chi \cdot {\rm winding}$, where the terminology ``winding'' is explained in Figure~\ref{fig::winding}, and to the left it is $-\lambda' + \chi \cdot {\rm winding}$.  This is also depicted in Figure~\ref{fig::conditional_boundary_data}.

By considering several flow lines of the same field, we can construct couplings of multiple $\SLE_\kappa(\ul{\rho})$ processes.  For example, suppose that $\theta \in \R$.  The flow line $\eta_\theta$ of $h+\theta\chi$ should be interpreted as the flow line of the vector field $e^{ih/\chi + \theta}$.  That is, $\eta_\theta$ is the flow line of $h$ with angle $\theta$.  If $h$ were a continuous function and we had $\theta_1 < \theta_2$, then it would be obvious that $\eta_{\theta_1}$ lies to the right of $\eta_{\theta_2}$.  Although non-trivial to prove, this is also true in the setting of the GFF \cite[Theorem~1.5]{MS_IMAG} and is depicted in Figure~\ref{fig::monotonicity}.

For $\theta_1 < \theta_2$, we can compute the conditional law of $\eta_{\theta_2}$ given $\eta_{\theta_1}$ \cite[Section 7.1]{MS_IMAG}.  It is an $\SLE_\kappa((a-\theta_2 \chi)/\lambda-1; (\theta_2-\theta_1) \chi/\lambda -2)$ process independently in each connected component of $\h \setminus \eta_{\theta_1}$ which lies to the left of $\eta_{\theta_1}$ \cite[Section 7.1]{MS_IMAG}.  Moreover, $\eta_{\theta_1}$ given $\eta_{\theta_2}$ is independently an $\SLE_\kappa((\theta_2-\theta_1) \chi/\lambda -2;(b+\theta_1\chi)/\lambda-1)$ in each of the connected components of $\h \setminus \eta_{\theta_2}$ which lie to the right of $\eta_{\theta_2}$.  This is depicted in Figure~\ref{fig::monotonicity}.

\begin{figure}[ht!]
\begin{center}
\includegraphics[scale=0.85]{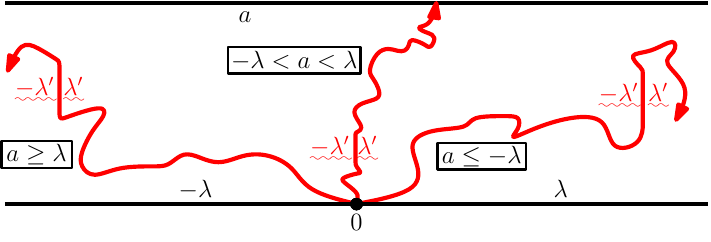}
\caption{\label{fig::hittingrange}  Suppose that $h$ is a GFF on the strip $\strip$ with the boundary data depicted above and $\eta$ is the flow line of $h$ starting at $0$.  The interaction of $\eta$ with the upper boundary $\striptop$ of $\partial \strip$ depends on $a$, the boundary data of $h$ on $\striptop$.  Curves shown represent almost sure behaviors corresponding to the three different regimes of $a$ (indicated by the closed boxes).  The path hits $\striptop$ almost surely if and only if $a \in (-\lambda, \lambda)$.  When $a \geq \lambda$, it tends to $-\infty$ (left end of the strip) and when $a \leq - \lambda$ it tends to $\infty$ (right end of the strip) without hitting $\striptop$.  This can be rephrased in terms of the weights $\ul{\rho}$: an $\SLE_{\kappa}(\ul{\rho})$ process almost surely does not hit a boundary interval $(x^{i,R},x^{i+1,R})$ (resp.\ $(x^{i+1,L},x^i)$) if $\sum_{s=1}^i \rho^{s,R} \geq \tfrac{\kappa}{2}-2$ (resp.\ $\sum_{s=1}^i \rho^{s,L} \geq \tfrac{\kappa}{2}-2$).  See \cite[Lemma~5.2]{MS_IMAG} and \cite[Remark~5.3]{MS_IMAG}.  These facts hold for all $\kappa > 0$.}
\end{center}
\end{figure}

\begin{figure}[ht!]
\begin{center}
\includegraphics[scale=0.85]{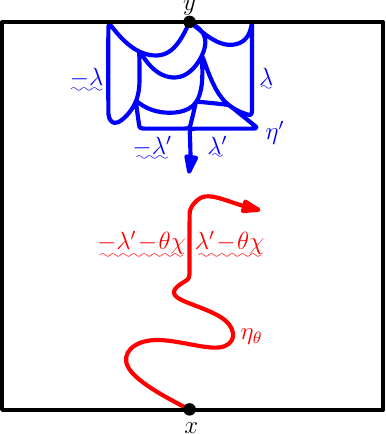}
\end{center}
\caption{\label{fig::counterflowline} We can construct $\SLE_\kappa$ flow lines and $\SLE_{\kappa'}$, $\kappa' = 16/\kappa \in (4,\infty)$, counterflow lines within the same geometry.  This is depicted above for a single counterflow line $\eta'$ emanating from $y$ and a flow line $\eta_\theta$ with angle $\theta$ starting from $0$ (we intentionally did not describe the boundary data of $h$ on $\partial D$).  If $\theta = \theta_R := \tfrac{1}{\chi}(\lambda'-\lambda) = -\tfrac{\pi}{2}$ so that the boundary data on the right side of $\eta_\theta$ matches that on the right side of $\eta'$, then $\eta_\theta$ will almost surely hit and then ``merge'' into the right boundary of $\eta'([0,\tau'])$ for any $\eta'$ stopping time $\tau'$ and, more generally, the right boundary of the entire trace of $\eta'$ is given by $\eta_\theta$ --- this fact is known as $\SLE$ duality.  Analogously, if $\theta = \theta_L := \tfrac{1}{\chi}(\lambda-\lambda') = \tfrac{\pi}{2} = -\theta_R$, then $\eta_\theta$ will almost surely hit and then merge into the left boundary of $\eta'([0,\tau'])$ and is equal to the left boundary of the entire trace of $\eta'$.  These facts are proved in \cite[Theorem~1.4]{MS_IMAG}.}
\end{figure}

It is also possible to determine which segments of the boundary a flow or counterflow line cannot hit.  This is described in terms of the boundary data of the field in Figure~\ref{fig::hittingrange} and proved in \cite[Lemma~5.2]{MS_IMAG} (this result gives the range of boundary data that $\eta$ cannot hit, contingent on the almost sure continuity of $\eta$; this, in turn, is given in \cite[Theorem~1.3]{MS_IMAG}).  This can be rephrased in terms of the weights $\ul{\rho}$: an $\SLE_{\kappa}(\ul{\rho})$ process almost surely does not hit a boundary interval $(x^{i,R},x^{i+1,R})$ (resp.\ $(x^{i+1,L},x^i)$) if $\sum_{s=1}^i \rho^{s,R} \geq \tfrac{\kappa}{2}-2$ (resp.\ $\sum_{s=1}^i \rho^{s,L} \geq \tfrac{\kappa}{2}-2$).  See \cite[Remark~5.3]{MS_IMAG}.

Recall that $\kappa' = 16/\kappa \in (4,\infty)$.  We refer to $\SLE_{\kappa'}$ processes $\eta'$ as \emph{counterflow} lines.  The left boundaries of $\eta'([0, \tau'])$, taken over a range of $\tau'$ values, form a tree structure comprised of $\SLE_\kappa$ flow lines which in some sense run orthogonal to $\eta'$.  The right boundaries form a dual tree structure.  We can construct couplings of $\SLE_{\kappa}$ and $\SLE_{\kappa'}$ processes (flow lines and counterflow lines) within the same imaginary geometry \cite[Theorem~1.4]{MS_IMAG}.  This is depicted in Figure~\ref{fig::counterflowline} in the special case of a single flow line $\eta_\theta$ with angle $\theta$ emanating from $x$ and targeted at $y$ and a single counterflow line $\eta'$ emanating from $y$.  When $\theta > \tfrac{1}{\chi}(\lambda-\lambda') = \tfrac{\pi}{2}$, $\eta_\theta$ almost surely passes to the left of (though may hit the left boundary of) $\eta'$ \cite[Theorem~1.4 and Theorem~1.5]{MS_IMAG}.  If $\theta = \tfrac{\pi}{2}$, then $\eta_\theta$ is equal to the left boundary of $\eta'$.  There is some intuition provided for this in Figure~\ref{fig::counterflowline}.  Analogously, if $\theta < \tfrac{1}{\chi}(\lambda'-\lambda) = -\tfrac{\pi}{2}$, then $\eta_{\theta}$ passes to the right of $\eta'$ \cite[Theorem~1.4 and Theorem~1.5]{MS_IMAG} and when $\theta = -\tfrac{\pi}{2}$, $\eta_\theta$ is equal to the right boundary of $\eta'$.

\begin{figure}[h!]
\begin{center}
\includegraphics[scale=0.85]{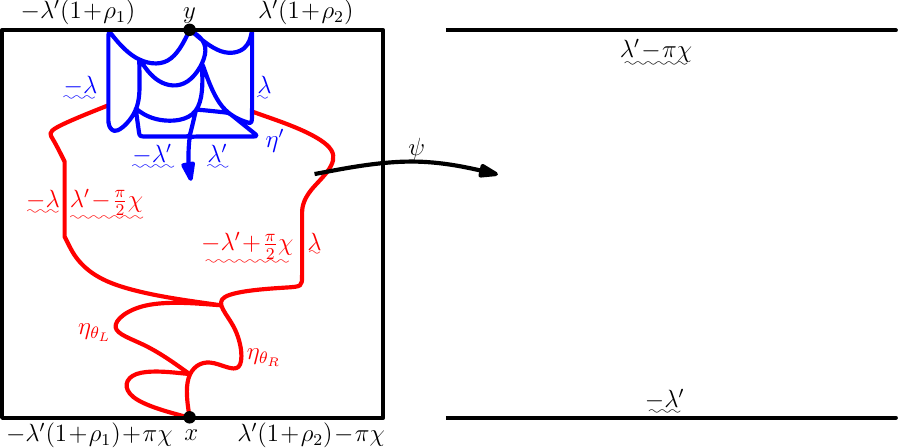}
\end{center}
\caption{\label{fig::duality} (Continuation of Figure~\ref{fig::counterflowline}).  We now assume that the boundary data for $h$ is as depicted above and that $\rho_1,\rho_2 > \tfrac{\kappa'}{2}-4$.  Then $\eta' \sim \SLE_{\kappa'}(\rho_1;\rho_2)$.  Let $\eta_{\theta_L}$ and $\eta_{\theta_R}$ be the left and right boundaries of the counterflow line $\eta'$, respectively.  One can check that in this case, $\eta_{\theta_q} \sim \SLE_{\kappa}(\rho_1^q;\rho_2^q)$ with $\rho_1^q,\rho_2^q > -2$ for $q \in \{L,R\}$ (see Figure~\ref{fig::conditional_boundary_data} and recall the transformation rule~\eqref{eqn::ac_eq_rel}).  Each connected component $C$ of $D \setminus (\eta_{\theta_L} \cup \eta_{\theta_R})$ which lies between $\eta_{\theta_L}$ and $\eta_{\theta_R}$ has two distinguished points $x_C$ and $y_C$ --- the first and last points on $\partial C$ traced by $\eta_{\theta_L}$ (as well as by $\eta_{\theta_R}$).  In each such $C$, the law of $\eta'$ is independently an $\SLE_{\kappa'}(\tfrac{\kappa'}{2}-4;\tfrac{\kappa'}{2}-4)$ process from $y_C$ to $x_C$ \cite[\propOB]{MS_IMAG}.  If we apply a conformal change of coordinates $\psi \colon C \to \strip$ with $\psi(x_C) = -\infty$ and $\psi(y_C) = \infty$, then the law of $h \circ \psi^{-1} - \chi \arg (\psi^{-1})'$ is a GFF on $\strip$ whose boundary data is depicted on the right hand side.  Moreover, $\psi(\eta')$ is the counterflow line of this field running from $+\infty$ to $-\infty$ and almost surely hits every point on $\partial \strip$.  This holds more generally whenever the boundary data is such that $\eta_{\theta_L},\eta_{\theta_R}$ make sense as flow lines of $h$ until terminating at $y$ (i.e., the continuation threshold it not hit until the process terminates at $y$).}
\end{figure}

\begin{figure}[ht!]
\begin{center}
\includegraphics[scale=0.85]{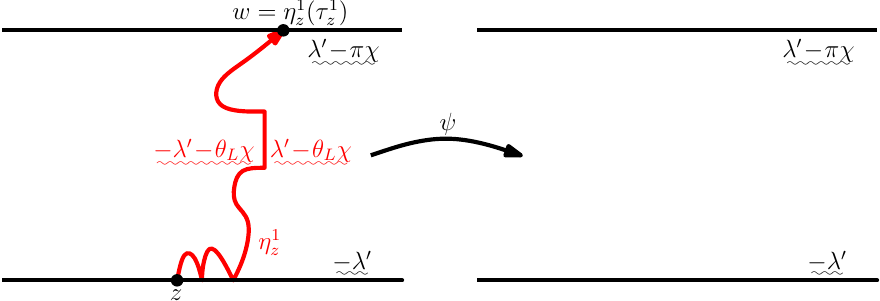}
\end{center}
\caption{\label{fig::duality2} Suppose that $h$ is a GFF on $\strip$ whose boundary data is depicted above and fix $z$ in the lower boundary $\stripbot$ of $\strip$.  Then the counterflow line $\eta'$ of $h$ from $\infty$ to $-\infty$ is an $\SLE_{\kappa'}(\tfrac{\kappa'}{2}-4;\tfrac{\kappa'}{2}-4)$ process (see Figure~\ref{fig::conditional_boundary_data} and recall the transformation rule~\eqref{eqn::ac_eq_rel}) and almost surely hits $z$, say at time $\tau_z'$.  The left boundary of $\eta'([0,\tau_z'])$ is almost surely equal to the flow line $\eta_z^1$ of $h$ starting at $z$ with angle $\theta_L = \tfrac{\pi}{2}$ stopped at time $\tau_z^1$, the first time it hits the upper boundary $\striptop$ of $\strip$.  The conditional law of $h$ given $\eta_z^1([0,\tau_z^1])$ in each connected component of $\strip \setminus \eta_z^1([0,\tau_z^1])$ which lies to the right of $\eta_z^1([0,\tau_z^1])$ is the same as $h$ itself, up to a conformal change of coordinates which preserves the entrance and exit points of $\eta'$.  The conditional law of $\eta'$ within each such component is independently that of an $\SLE_{\kappa'}(\tfrac{\kappa'}{2}-4;\tfrac{\kappa'}{2}-4)$ from the first to last endpoint \cite[\propFP]{MS_IMAG}.
}
\end{figure}

\begin{figure}[ht!]
\begin{center}
\includegraphics[scale=0.85]{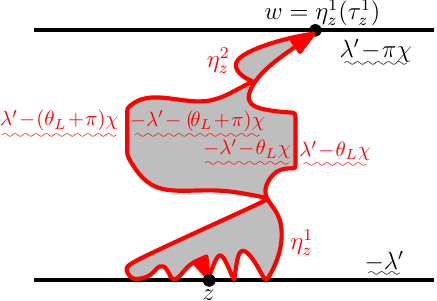}
\end{center}
\caption{\label{fig::duality3}(Continuation of Figure~\ref{fig::duality2})  Moreover, $\eta'([\tau_z',\infty))$ almost surely stays to the left of $\eta_z^1([0,\tau_z^1])$.  It is the counterflow line of $h$ restricted to the left component of $\strip \setminus \eta_z^1([0,\tau_z^1])$, starting at $z$ and running to $-\infty$ \cite[\propFP]{MS_IMAG}.  Let $w = \eta_z^1(\tau_z^1)$.  Since $\eta'$ is boundary filling and cannot enter into the loops it creates with itself or the boundary, the first point on $\striptop$ that $\eta'$ hits after time $\tau_z'$ is $w$.  The left boundary of $\eta'|_{[\tau_z',\infty)}$ is given by the flow line $\eta_z^2$ of $h$ given $\eta_z^1([0,\tau_z^1])$ in the left connected component of $\strip \setminus \eta_z^1([0,\tau_z^1])$ with angle $\theta_L$, started at $w$ and stopped at the time $\tau_z^2$ that it first hits $z$.  The order in which $\eta'$ hits the connected components of $\strip \setminus (\eta_z^1([0,\tau_z^1]) \cup \eta_z^2([0,\tau_z^2]))$ which lie to the right of $\eta_z^1([0,\tau_z^1])$ is determined by the reverse chronological order that $\eta_z^1$ traces their boundary.  Likewise, the order in which $\eta'$ hits those connected components which lie to the left of $\eta_z^2([0,\tau_z^2])$ is determined by the reverse chronological order that $\eta_z^2$ traces their boundary \cite[\propFP]{MS_IMAG}.  Lemma~\ref{lem::reflecting_strip} states that the law of the pair $\{\eta_z^1|_{[0,\tau_z^1]}, \eta_z^2|_{[0,\tau_z^2]}\}$ is invariant under reflection about the vertical axis through $z$ up to time-reversal and reparameterization.  This is a necessary condition for $\eta'$ to be reversible.}
\end{figure}

Just as in the setting of multiple flow lines, we can compute the conditional law of a counterflow line given the realization of a flow line (or multiple flow lines) within the same geometry.  One case of this which will be particularly important for us is explained in Figure~\ref{fig::duality} --- that the conditional law of $\eta'$ given its left and right boundaries evolves as an $\SLE_{\kappa'}(\tfrac{\kappa'}{2}-4;\tfrac{\kappa'}{2}-4)$ process independently in each of the complementary connected components which lie between its left and right boundaries \cite[\propOB]{MS_IMAG}.  This is sometimes referred to as ``strong duality'' for $\SLE$ (see \cite[Section 8.2] {DUB_PART} for related results).  We remark that $\tfrac{\kappa'}{2}-4$ is the critical value of $\rho$ at which counterflow lines are boundary filling.  When $\rho > \tfrac{\kappa'}{2}-4$, then $\SLE_{\kappa'}(\rho)$ does not fill the boundary and when $\rho \in (-2,\tfrac{\kappa'}{2}-4]$, then $\SLE_{\kappa'}(\rho)$ does fill the boundary.  The situation is analogous for two-sided $\SLE_{\kappa'}(\rho_1;\rho_2)$.

There is an important variant of $\SLE$ duality which allows us to give the law of the outer boundary of the counterflow line $\eta'$ upon hitting any point $z$ on the boundary \cite[\propFP]{MS_IMAG}.  If $z$ is on the left side of $\partial D$, it is given by the flow line of $h$ with angle $-\tfrac{\pi}{2}$ and if $z$ is on the right side of $\partial D$, it is given by the flow of $h$ with angle $\tfrac{\pi}{2}$.   This is explained in Figure~\ref{fig::duality2} in the special case of boundary filling $\SLE_{\kappa'}(\tfrac{\kappa'}{2}-4;\tfrac{\kappa'}{2}-4)$ processes.  This will be particularly important for this article, since it will allow us to describe the geometry of the outer boundary between the set of points that $\eta'$ visits before and after hitting a given boundary point.  Iterating the procedure of decomposing the path into its future and past leads to a new path decomposition of $\SLE_{\kappa'}$ curves.  We remark that this result is closely related to a decomposition of $\SLE_{\kappa'}$ paths into a so-called ``light cone'' of angle restricted $\SLE_\kappa$ trajectories in the same imaginary geometry \cite[Theorem~1.4]{MS_IMAG}.

\section{Proofs}
\label{sec::proofs}

In this section, we will complete the proofs of Theorems~\ref{thm::reversible}--\ref{thm::sle_kappa_rho_non_reversible}.  The strategy for the former two is first to reduce the reversibility of $\SLE_{\kappa'}(\rho_1;\rho_2)$ for $\rho_1,\rho_2 \geq \tfrac{\kappa'}{2}-4$ to the reversibility of $\SLE_{\kappa'}(\tfrac{\kappa'}{2}-4;\tfrac{\kappa'}{2}-4)$ (Lemma~\ref{lem::reduction}).  The main step to establish the reversibility in this special case is Lemma~\ref{lem::reflecting_strip}, which implies that the law of the geometry of the outer boundary of the set of points visited by such a curve before and after hitting a particular boundary point $z$ is invariant under the anti-conformal map which swaps the seed and terminal point but fixes $z$ (see Figure~\ref{fig::duality3}).  This allows us to construct a coupling of two $\SLE_{\kappa'}(\tfrac{\kappa'}{2}-4;\tfrac{\kappa'}{2}-4)$ processes growing in opposite directions whose outer boundary before and after hitting $z$ is the same.  Successively iterating this exploration procedure in the complementary components results in a coupling where one path is almost surely the time-reversal of the other, which completes the proof of reversibility.  The proof of Lemma~\ref{lem::reflecting_strip} will make use of the reversibility of $\SLE_{\kappa}(\rho_1;\rho_2)$ \cite[Theorem~1.1]{MS_IMAG2} and the ``light cone'' characterization of $\SLE_{\kappa'}$ from \cite[Theorem~1.4]{MS_IMAG}.  We will in particular need the variant of $\SLE$ duality described in Figure~\ref{fig::duality2} and in Figure~\ref{fig::duality3} (see \cite[Section 7.4.3]{MS_IMAG}) .

\subsection{Reducing Theorems~\ref{thm::reversible} and~\ref{thm::sle_kappa_rho_reversible} to critical case}
We begin with the reduction of Theorem~\ref{thm::sle_kappa_rho_reversible} to the critical boundary-filling case, which was mostly explained in Figure~\ref{fig::outerboundaryreversibility}.
\begin{lemma}
\label{lem::reduction}
Fix $\rho_1,\rho_2 \geq \tfrac{\kappa'}{2}-4$.  The reversibility of $\SLE_{\kappa'}(\rho_1;\rho_2)$ is equivalent to the reversibility of $\SLE_{\kappa'}(\tfrac{\kappa'}{2}-4;\tfrac{\kappa'}{2}-4)$.
\end{lemma}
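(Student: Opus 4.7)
The trivial direction is that $\SLE_{\kappa'}(\tfrac{\kappa'}{2}-4;\tfrac{\kappa'}{2}-4)$ is itself a particular case of $\SLE_{\kappa'}(\rho_1;\rho_2)$ with $\rho_1,\rho_2 \geq \tfrac{\kappa'}{2}-4$, so reversibility of the latter family immediately implies reversibility in the critical case. The content of the lemma is the converse implication, and the plan is to carry out the decomposition outlined in Figure~\ref{fig::outerboundaryreversibility}, i.e.\ (outer boundary)$+$(independent critical bubbles).

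Assume critical reversibility and let $\eta'$ be an $\SLE_{\kappa'}(\rho_1;\rho_2)$ in $D$ from $x$ to $y$ with $\rho_1,\rho_2 \ge \tfrac{\kappa'}{2}-4$. First I would couple $\eta'$ with a GFF $h$ on $D$ having the boundary data of Figure~\ref{fig::duality}, so that $\eta'$ is the counterflow line of $h$ from $x$ to $y$, and I would let $\eta_L, \eta_R$ denote its left and right outer boundaries. Invoking \cite[Theorem 1.4]{MS_IMAG}, $\eta_L$ and $\eta_R$ are the flow lines of $h$ from $y$ to $x$ with angles $\tfrac{\pi}{2}$ and $-\tfrac{\pi}{2}$ respectively; they form a coupled pair of $\SLE_\kappa(\ul{\rho}^L;\ul{\rho}^R)$ curves all of whose weights exceed $-2$. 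The reversibility results of \cite[Theorem 1.1]{MS_IMAG2} then imply that the joint law of $(\eta_L,\eta_R)$ is invariant, up to reparameterization, under the anti-conformal involution $\psi\colon D\to D$ that swaps $x$ and $y$. Next I would apply \cite[\propOB]{MS_IMAG}: conditional on $(\eta_L,\eta_R)$, the restriction of $\eta'$ to each complementary bubble $U_i$ of $D\setminus(\eta_L\cup\eta_R)$ lying between $\eta_L$ and $\eta_R$ is, independently across $i$, an $\SLE_{\kappa'}(\tfrac{\kappa'}{2}-4;\tfrac{\kappa'}{2}-4)$ from the first marked prime end $x_i$ to the last marked prime end $y_i$ of $\partial U_i$.

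To conclude, I would use the assumed critical reversibility to produce, independently in each bubble, a coupling of $\eta'|_{U_i}$ with a curve $\tilde\eta_i$ from $y_i$ to $x_i$ that is almost surely its time-reversal (up to reparameterization). Concatenating these $\tilde\eta_i$ along the paths $\psi(\eta_L)$ and $\psi(\eta_R)$, in the order dictated by the reverse chronological traversal of the bubbles by the outer boundary, yields a curve $\tilde\eta'$ from $y$ to $x$. Combining the $\psi$-symmetry of $(\eta_L,\eta_R)$ with the independence and $\psi$-symmetry of the critical bubble fills shows that $\psi\circ\tilde\eta'$ has the law of $\eta'$, while by construction $\tilde\eta'$ is almost surely the time-reversal of $\psi\circ\eta'$; this is the desired reversibility.

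The main obstacle I anticipate is verifying that the bubble-by-bubble re-coupling genuinely reassembles into a measurable, continuous curve with the correct joint law, and in particular that the traversal order of the $\tilde\eta_i$ agrees almost surely with the reverse of the order in which $\eta'$ visits the $U_i$. This ordering is implicit in the flow-line description of $(\eta_L,\eta_R)$ from \cite[\propOB]{MS_IMAG} (cf.\ the description in Figure~\ref{fig::duality3}), so once the uniqueness of the (outer boundary)$+$(independent bubbles) decomposition is in hand, the rest of the proof is a matter of combining two already-established symmetries.
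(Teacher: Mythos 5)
Your proposal is correct and takes essentially the same route as the paper's proof: decompose $\eta'$ into its outer boundary pair $(\eta_L,\eta_R)$, deduce the $\psi$-symmetry of that pair from \cite[Theorem 1.1]{MS_IMAG2} (the paper does this sequentially, first reversing $\eta_L$ and then $\eta_R$ given $\eta_L$, which is the precise form of your ``joint law is invariant'' claim), and then invoke \cite[\propOB]{MS_IMAG} to reduce to independent $\SLE_{\kappa'}(\tfrac{\kappa'}{2}-4;\tfrac{\kappa'}{2}-4)$ fills of the bubbles. Your added discussion of the traversal order and reassembly is a reasonable point that the paper's proof leaves implicit.
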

\begin{proof}
Suppose that $D$ is a Jordan domain and $x,y \in \partial D$ are distinct.  Assume that $\rho_1,\rho_2 > \tfrac{\kappa'}{2}-4$ and let $\eta'$ be an $\SLE_{\kappa'}(\rho_1;\rho_2)$ from $y$ to $x$.  Let $\psi \colon D \to D$ be an anti-conformal map which swaps $x$ and $y$.  Figure~\ref{fig::duality} implies that the left boundary $\eta_L$ of $\eta'$ is an $\SLE_\kappa(\rho_1^L;\rho_2^L)$ process from $x$ to $y$ for some $\rho_1^L,\rho_2^L > -2$.  Since the time-reversal of $\eta_L$ is an $\SLE_\kappa(\rho_2^L;\rho_1^L)$ process from $y$ to $x$ \cite[Theorem~1.1]{MS_IMAG2}, it follows that $\psi(\eta_L)$ has the law of the left boundary of an $\SLE_{\kappa'}(\rho_1;\rho_2)$ process in $D$ from $y$ to $x$.  Combining Figure~\ref{fig::duality} with Figure~\ref{fig::monotonicity}, we see that the right boundary $\eta_R$ of $\eta'$ conditional on $\eta_L$ is also an $\SLE_\kappa(\rho_1^R;\rho_2^R)$ process for $\rho_1^R,\rho_2^R > -2$ from $x$ to $y$.  Thus \cite[Theorem~1.1]{MS_IMAG2} implies that the time-reversal of $\eta_R$ given $\eta_L$ is an $\SLE_\kappa(\rho_2^R;\rho_1^R)$ process from $y$ to $x$.  Consequently, we have that $\psi(\{\eta_L,\eta_R\})$ has the law of the outer boundary of an $\SLE_{\kappa'}(\rho_1;\rho_2)$ process in $D$ from $y$ to $x$.  By Figure~\ref{fig::duality} (and \cite[\propOB]{MS_IMAG}), we know that the conditional law of $\eta'$ given $\eta_L$ and $\eta_R$ is an $\SLE_{\kappa'}(\tfrac{\kappa'}{2}-4;\tfrac{\kappa'}{2}-4)$ process independently in each of the connected components of $D \setminus (\eta_L \cup \eta_R)$ which lie between $\eta_L$ and $\eta_R$.  This proves the desired equivalence for $\rho_1,\rho_2 > \tfrac{\kappa'}{2}-4$.  The proof is analogous if either $\rho_1 = \tfrac{\kappa'}{2}-4$ or $\rho_2 = \tfrac{\kappa'}{2}-4$.
\end{proof}

\subsection{Main lemma}

For the remainder of this section, we shall make use of the following setup.  Let $\strip = \R \times (0,1)$ be the infinite horizontal strip in $\C$ and let $h$ be a GFF on $\strip$ whose boundary data is as indicated in Figure~\ref{fig::duality3}.  Let $\stripbot$ and $\striptop$ denote the lower and upper boundaries of $\strip$, respectively.  Fix $z \in \stripbot$ and let $\eta_z^1$ be the flow line of $h$ starting at $z$ with angle $\theta_L := (\lambda-\lambda')/\chi = \tfrac{\pi}{2}$ --- this is the flow line of $h+\theta_L \chi$ starting at $z$.  Due to the choice of boundary data, $\eta_z^1$ almost surely hits $\striptop$ (see Figure~\ref{fig::hittingrange}), say at time $\tau_z^1$.  Let $\eta_z^2$ be the flow line of $h$ with angle $\theta_L$ starting from $w = \eta_z^1(\tau_z^1)$ in the left connected component of $\strip \setminus \eta_z^1([0,\tau_z^1])$.  Due to the choice of boundary data, $\eta_z^2$ almost surely hits $\stripbot$ at $z$ (though it will hit $\stripbot$ first in other places, see Figure~\ref{fig::hittingrange}), say at time $\tau_z^2$.

For each $a \in \R$, we let $\reflection_a \colon \C \to \C$ be the reflection of $\C$ about the vertical line through $a$.  We will now prove that the law of $T_z = \{\eta_z^1|_{[0,\tau_z^1]},\eta_z^2|_{[0,\tau_z^2]}\}$, as in Figure~\ref{fig::duality3}, is invariant under $\reflection_z$, up to time-reversal and reparameterization.

\begin{lemma}
\label{lem::reflecting_strip}
The law of $T_z$ defined just above is invariant under $\reflection_z$, up to time-reversal and reparameterization.
\end{lemma}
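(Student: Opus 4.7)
My plan is to first reduce the claim to a manifestly symmetric infinite-volume picture in $\h$, and then to transfer this symmetry back to the strip $\strip$. The first step is to identify the laws of the two paths: using the GFF coupling reviewed in Section~\ref{subsec::imaginary} together with the conditional-law calculus of Figure~\ref{fig::monotonicity}, I would check that $\eta_z^1$ is an $\SLE_\kappa(\rho_1;\rho_2)$ process in $\strip$ whose weights are computed explicitly from the boundary data of $h$ and the angle $\theta_L = \tfrac{\pi}{2}$, and that $\eta_z^2$ given $\eta_z^1|_{[0,\tau_z^1]}$ is again an $\SLE_\kappa(\rho_1';\rho_2')$ process in the left connected component of $\strip \setminus \eta_z^1([0,\tau_z^1])$. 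Because the weights satisfy $\rho_i,\rho_i' > -2$, \cite[Theorem 1.1]{MS_IMAG2} applies and the time-reversal of $\eta_z^2$ is itself an $\SLE_\kappa(\rho_2';\rho_1')$ from $z$ to $w$ in the same component.

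Next I would \emph{zoom in} at $z$ by a conformal map $\strip \to \h$ sending $z$ to $0$ and $w = \eta_z^1(\tau_z^1)$ to $\infty$, transforming $h$ by the coordinate-change rule \eqref{eqn::ac_eq_rel}. In the resulting infinite-volume limit, the image of $\eta_z^1$ and the image of the time-reversal of $\eta_z^2$ are both $\SLE_\kappa(\cdot;\cdot)$ flow lines of a GFF on $\h$ running from $0$ to $\infty$, and the critical point is that the limiting boundary data on $\partial \h$ becomes symmetric under $\reflection_0$: the values to the left and to the right of $0$ are mirror images of each other, reflecting the symmetric role that the angle $\theta_L$ plays on the two sides of $z$. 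Since each flow line is almost surely determined by the field \cite[Theorem 1.2]{MS_IMAG} and this symmetry of the field descends to the flow lines, the joint law of the infinite-volume pair is invariant under $\reflection_0$.

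The hardest step, and the main obstacle, is transferring this infinite-volume reflection symmetry back to the finite-volume strip, where $\eta_z^1$ and $\eta_z^2$ are constrained to meet at a \emph{common} random point $w \in \striptop$: the infinite-volume limit discards the location of $w$, so a priori it is not clear that $w$ and $\reflection_z(w)$ have the same law. Following the hint in the overview, I would introduce a \emph{second pair} of flow lines emanating from an auxiliary boundary point $z' \in \partial \h$, chosen so that $T_z$ and $T_{z'}$ jointly encode the location of $w$ through the light-cone description of $\SLE_{\kappa'}$ \cite[Theorem 1.4]{MS_IMAG} and the duality decomposition \cite[\propFP]{MS_IMAG}. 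The infinite-volume symmetry for the combined pair $\{T_z, T_{z'}\}$ forces $w$ to inherit the $\reflection_z$ symmetry in the finite volume, and marginalizing out $T_{z'}$ yields Lemma~\ref{lem::reflecting_strip}. The technical core will be picking $z'$ precisely so that the marginalization is valid and so that all of the conditional flow-line, counterflow-line, and duality decompositions remain reflection-compatible throughout the argument.
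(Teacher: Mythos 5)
Your high-level outline tracks the paper's own introductory sketch (pass to a half-plane picture, use the time-reversal symmetry of $\SLE_\kappa(\rho_1;\rho_2)$ from \cite[Theorem 1.1]{MS_IMAG2}, and bring in a second boundary point), but the two steps that actually carry the proof are either wrong or missing. First, your embedding of the strip picture into the half plane is not the right one. Mapping $\strip \to \h$ so that $z \mapsto 0$ and $w = \eta_z^1(\tau_z^1) \mapsto \infty$ uses a \emph{random} conformal map depending on the very paths whose law you are trying to control, and it does not produce constant boundary data $-\lambda'$ on $\partial \h$ (the images of $\striptop$ and $\stripbot$ carry different values). The correct relationship runs in the opposite direction: one starts with a GFF on $\h$ with constant boundary data $-\lambda'$, builds the pairs $T_0$ and $T_1$ of flow lines from $0$ and from $1$, and observes that, \emph{given} the bead $U_1$ of $T_0$ containing $1$, the restriction of $T_1$ to $U_1$ has exactly the law of the strip configuration of Figure~\ref{fig::duality3} after the conformal map $\psi_1 \colon U_1 \to \strip$; no limit is taken and $w$ is never singled out. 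Relatedly, your justification of the single-point half-plane symmetry (``the symmetry of the field descends to the flow lines'') proves too much: constant boundary data $c$ is reflection-symmetric for every $c$, yet the symmetry of the pair $\{\eta_z^1,\eta_z^2\}$ holds \emph{if and only if} $c=-\lambda'$ (Lemma~\ref{lem::reflecting_half_plane_simple}). The reflection is anti-conformal, so it flips angles and swaps the roles of the two sides, and one must verify by an explicit weight computation together with \cite[Theorem 1.1]{MS_IMAG2} that the reflected pair reproduces the original conditional laws.

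Second, and more seriously, the step you defer to ``the technical core'' is the actual content of the proof and is not supplied. Via $\psi_1 \circ \reflection_{1/2}$, the strip symmetry about $z$ is equivalent to the invariance of the \emph{joint} law of $\{T_0,T_1\}$ under $\reflection_{1/2}$. This does not follow by ``marginalizing'' from the individual symmetries of $T_0$ about $0$ and of $T_1$ about $1$: those are reflections about different axes, and $T_0$, $T_1$ are coupled through the same field. The paper closes this gap with a resampling/coalescence argument (Lemma~\ref{lem::reflecting_half_plane}): a kernel $\CK$ that resamples $T_0$ and $T_1$ in a random order preserves both the law of the configuration and the law of its reflection; running the two chains with common randomness, they coalesce at the almost surely finite time at which the first common bead of $T_0$ and $T_1$ intersects both $(0,1)$ and $(1,\infty)$, which forces the two $\CK$-invariant laws to coincide. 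Without this coupling argument, or a genuine substitute for it, your proposal does not establish the lemma.
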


We note that $\reflection_z$ is the unique anti-conformal map $\strip \to \strip$ which fixes $z$ and swaps $-\infty$ with $+\infty$.  The proof begins with a half-plane version of the construction described in Figure~\ref{fig::duality3} (as would be obtained by ``zooming in near $z$'') which we explain in Figure~\ref{fig::half_plane_setup}.  We then consider a similar construction (using the same instance of the GFF) from a nearby point, as shown in Figure~\ref{fig::half_plane_setup2}.  The result follows from these constructions in a somewhat indirect but rather interesting way.  It builds on time-reversal results for $\SLE_\kappa(\rho_1;\rho_2)$ processes \cite[Theorem~1.1]{MS_IMAG2} (see also \cite{Z_R_KAPPA, DUB_DUAL}) while avoiding additional calculation.

\begin{figure}[!ht]
\begin{center}
\includegraphics[scale=0.85]{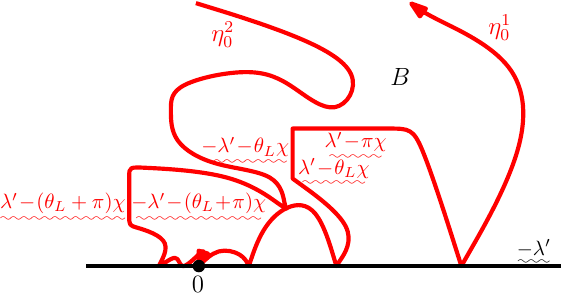}
\caption{\label{fig::half_plane_setup}
We consider the analog of Figure~\ref{fig::duality3} in which $\strip$ is replaced by the entire upper half plane $\h$.  We let $h$ be a GFF on $\h$ with constant boundary data $-\lambda'$ as depicted above.  For $z \in \partial \h$, we let $\eta_z^1$ be the flow line of $h$ with angle $\theta_L$ starting at $z$.  Conditional on $\eta_z^1$, we let $\eta_z^2$ be the flow line of $h$ with angle $\theta_L$ starting at~$\infty$ in the left connected component of $\h \setminus \eta_z^1$.  The particular case $z=0$ is depicted above.  In this case, the symmetry of the law of the pair of paths under reflection about the vertical line through $0$ holds if and only if $c = -\lambda'$, as in the figure (Lemma~\ref{lem::reflecting_half_plane_simple}).  We will extract this from the time-reversal symmetry of $\SLE_\kappa(\rho_1;\rho_2)$ \cite[Theorem~1.1]{MS_IMAG2}.  The area between the pair of paths can be understood as a countable sequence of ``beads''.  Some of these beads have boundaries that intersect the negative real axis, some the positive real axis, some neither axis, and some both axes (see Figure~\ref{fig::hittingrange}).}
\end{center}
\end{figure}

\begin {figure}[!ht]
\begin {center}
\includegraphics[scale=0.85]{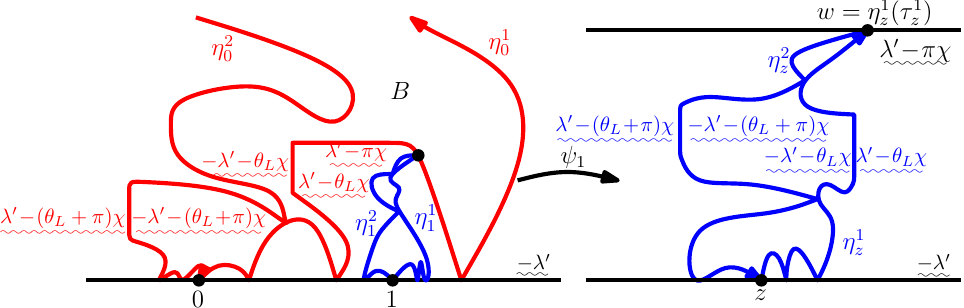}
\caption {\label{fig::half_plane_setup2}
(Continuation of Figure~\ref{fig::half_plane_setup}).  The analogous sequence of beads beginning at~$1$ will at some point merge with the sequence beginning at~$0$ since their boundaries are given by flow lines with the same angle (see \cite[Theorem~1.5]{MS_IMAG}).  Let $B$ be the first bead that belongs to both sequences.  In general, this bead may intersect any subset of the three intervals $(-\infty,0)$, $(0,1)$, and $(1,\infty)$ (see Figure~\ref{fig::hittingrange}).  (In the sketch above, it intersects both $(0,1)$ and $(1,\infty)$.)  Let~$U_1$ be the connected component of~$\h \setminus (\eta_0^1 \cup \eta_0^2)$ which contains~$1$ and let $\psi_1 \colon U_1 \to \strip$ be the conformal map which takes $1$ to $z$, $z \in \stripbot$ fixed, and the left and right most points of $\partial U_1 \cap \R$ to $-\infty$ and $\infty$, respectively.  Then $h \circ \psi_1^{-1} - \chi \arg (\psi_1^{-1})'$ is a GFF on $\strip$ whose boundary data is as depicted on the right side, which is exactly the same as in Figure~\ref{fig::duality3}.}
\end{center}
\end{figure}

\begin{proof}[Proof of Lemma~\ref{lem::reflecting_strip}]
Suppose that $h$ is a GFF on $\h$ with constant boundary data $c=-\lambda'$ as depicted in Figure~\ref{fig::half_plane_setup} and Figure~\ref{fig::half_plane_setup2}.  The main construction in this proof actually makes sense for any $c$ such that $c \leq -\lambda'$ and $c+\theta_L \chi > -\lambda$ (and we will make use of this fact later).  For each $z \in \R$, we let $\eta_z^1$ be the flow line of $h$ starting at $z$ with angle $\theta_L$.  Note that $\eta_z^1$ is an $\SLE_\kappa(\tfrac{-c-\theta_L\chi}{\lambda}-1;\tfrac{c+\theta_L \chi}{\lambda}-1)$ process (see Figure~\ref{fig::conditional_boundary_data}).  Our hypotheses on $c$ imply that both
\[ \frac{-c-\theta_L \chi}{\lambda} - 1 \geq \frac{\kappa}{2}-2  \quad\text{and}\quad \frac{c+\theta_L \chi}{\lambda} -1 \leq -\frac{\kappa}{2} < \frac{\kappa}{2} -2.\]
In the latter inequality, we used that $\kappa \in (2,4)$.  Consequently, $\eta_z^1$ almost surely does not hit $(-\infty,z)$ but almost surely does intersect $(z,\infty)$ (see Figure~\ref{fig::hittingrange} as well as \cite[Remark~5.3]{MS_IMAG}).  Conditionally on~$\eta_z^1$, we let~$\eta_z^2$ be the flow line of $h$ in the left connected component of $\h \setminus \eta_z^1$ starting at~$\infty$ with angle $\theta_L$.  Then $\eta_z^2$ is an $\SLE_\kappa(\kappa-4;\tfrac{c+\theta_L \chi}{\lambda} - 1)$ process in the left connected component of $\h \setminus \eta_z^1$ from $\infty$ to $z$ (see Figure~\ref{fig::conditional_boundary_data} and Figure~\ref{fig::monotonicity}; note also that since $\kappa \in (2,4)$ we have that $\kappa-4 > -2$).  Therefore $\eta_z^2$ almost surely exits $\h$ at $z$ and cannot be continued further (though it may hit $\R$ in $(-\infty,z)$ before exiting; see Figure~\ref{fig::hittingrange}).

We let~$U_0$ be the connected component of $\h \setminus (\eta_1^1 \cup \eta_1^2)$ which contains~$0$ on its boundary.  Similarly, we let~$U_1$ be the connected component of $\h \setminus (\eta_0^1 \cup \eta_0^2)$ which contains~$1$ on its boundary.  Note that the first point on~$\eta_1^1$ that intersects~$\eta_0^1$ is the same as the last point on $\eta_1^2$ that intersects $\eta_0^1$.  Indeed, the reason for this is that $\eta_0^2$ and $\eta_1^2$ agree with each other until the first time that they hit $\eta_0^1 \setminus \eta_1^1$.  This follows because the paths stopped at this time are both given by the flow line from $\infty$ targeted at $0$ of $h$ given $\eta_0^1$ and $\eta_1^1$ in the left connected component of $\h \setminus (\eta_0^1 \cup \eta_1^1)$ (which is the same as the left connected component of $\h \setminus \eta_0^1$ by monotonicity).  Upon hitting $\eta_0^1 \setminus \eta_1^1$, $\eta_0^2$ will continue reflecting off $\eta_0^1$ (and $\partial \h$) until it reaches $0$, while $\eta_1^2$ will merge with $\eta_0^1$.  Indeed, the reason that $\eta_0^1$ and $\eta_1^2$ merge is that they are both flow lines of the common GFF given by the restriction of $h$ to the left connected component of $\h \setminus \eta_1^1$ and have the same angle hence we can apply \cite[Theorem~1.5]{MS_IMAG}.  By their definition, the two paths will agree exactly up to where $\eta_1^1$ hits $\eta_0^1$ for the first time.  Upon hitting $\eta_1^1$ at this point, $\eta_1^2$ will continue bouncing off $\eta_1^1$ and $\partial \h$ until reaching $1$ (as it is the flow line of the restriction of $h$ to the left component of $\h \setminus \eta_1^1$).  Consequently, the restrictions of $\eta_1^1$ and $\eta_1^2$ to $U_1$ meet $\eta_0^1$ at the same point.  By the same argument, the restrictions of $\eta_0^1$ and $\eta_0^2$ to $U_0$ meet $\eta_1^2$ at the same point.  Let $\psi_1 \colon U_1 \to \strip$ be the conformal transformation, as indicated in Figure~\ref{fig::half_plane_setup2}, which takes the left and rightmost points of $\R \cap \partial U_1$ to $-\infty$ and $+\infty$, respectively, and~$1$ to~$z$.  Let~$S_1$ be the image of the restrictions of~$\eta_1^1$ and~$\eta_1^2$ to~$U_1$.  Given~$U_1$, $S_1$ is equal in law to the bead sequence constructed in Figure~\ref{fig::duality3} (see Figure~\ref{fig::half_plane_setup2}).  The same is also true for~$U_0$ when we define~$S_0$ analogously.

Note that $\reflection_{1/2}$ is an anti-conformal automorphism of $\h$ which swaps $0$ and $1$.  Thus $\psi_1 \circ \reflection_{1/2}$ is an anti-conformal map from $\reflection_{1/2}(U_1)$ (which is a neighborhood of $0$) to $\strip$.  Thus, Lemma~\ref{lem::reflecting_strip} is a consequence of Lemma~\ref{lem::reflecting_half_plane}, stated and proved just below (and which uses $c=-\lambda'$).
\end{proof}

Before we state and prove Lemma~\ref{lem::reflecting_half_plane}, we first need the following lemma which gives the reflection invariance of the pair of paths $T_z = \{\eta_z^1,\eta_z^2\}$ for $z \in \partial \h$ (up to a time-reversal and reparameterization of the paths).

\begin{lemma}
\label{lem::reflecting_half_plane_simple}
Suppose that $h$ is a GFF on $\h$ with constant boundary data $c$ with $c \in (-\lambda - \theta_L \chi,-\lambda']$.  For each $z \in \R$, let $\eta_z^1$ be the flow line of $h$ starting at $z$ with angle $\theta_L$.  Conditionally on $\eta_z^1$, let $\eta_z^2$ be the flow line of $h$ in the left connected component of $\h \setminus \eta_z^1$ from $\infty$ with angle $\theta_L$.  Then the law of the pair $\{\eta_z^1 ,\eta_z^2\}$ is invariant under $\reflection_z$ modulo direction reversing reparameterization if and only if $c = -\lambda'$.
\end{lemma}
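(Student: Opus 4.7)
The plan is to prove both directions of the iff by first identifying the relevant $\SLE_\kappa(\rho_1;\rho_2)$ laws of $\eta_z^1$ and $\eta_z^2$, then invoking the reversibility of chordal $\SLE_\kappa(\rho_1;\rho_2)$ processes from \cite[Theorem 1.1]{MS_IMAG2} together with the GFF flow-line calculus of \cite{MS_IMAG}. Setting $\alpha := c + \theta_L\chi$, Figure~\ref{fig::conditional_boundary_data} shows that $\eta_z^1$ is distributed as an $\SLE_\kappa(\rho_1;\rho_2)$ from $z$ targeted at $\infty$ with
\[
\rho_1 = -\frac{\alpha}{\lambda} - 1, \qquad \rho_2 = \frac{\alpha}{\lambda} - 1,
\]
so that $\rho_1+\rho_2 = -2$. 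Conditional on $\eta_z^1$, Figure~\ref{fig::monotonicity} together with the GFF boundary values along $\eta_z^1$ identifies $\eta_z^2$ as an $\SLE_\kappa(\kappa-4;\rho_2)$ from $\infty$ to $z$ in the left connected component of $\h \setminus \eta_z^1$ (as already noted in the proof of Lemma~\ref{lem::reflecting_strip}).

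For the sufficiency direction, I would assume $c = -\lambda'$, which gives $\rho_1 = \tfrac{\kappa}{2}-2$ and $\rho_2 = -\tfrac{\kappa}{2}$. The key step is to identify the unconditional marginal law of $\eta_z^2$ in $\h$ as that of an $\SLE_\kappa(\rho_1;\rho_2)$ process from $\infty$ to $z$.  Granting this identification, one then applies \cite[Theorem 1.1]{MS_IMAG2} to time-reverse $\eta_z^1$ to obtain an $\SLE_\kappa(\rho_2;\rho_1)$ from $\infty$ to $z$; since $\reflection_z$ swaps the weights at the two force points, the reflected time-reversal of $\eta_z^1$ is an $\SLE_\kappa(\rho_1;\rho_2)$ from $\infty$ to $z$, agreeing with the marginal of $\eta_z^2$.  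A parallel comparison of conditional laws using the formulas of Figure~\ref{fig::monotonicity} extends this to equality of joint distributions, yielding $\reflection_z$-invariance of $\{\eta_z^1,\eta_z^2\}$ modulo direction-reversing reparameterization.

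For the necessity direction, suppose the pair is $\reflection_z$-invariant modulo direction-reversing reparameterization.  Since $\eta_z^1$ is the unique member of the pair that starts at $z$ and accumulates on $(z,\infty)$, this symmetry forces the marginal law of $\eta_z^2$ in $\h$ to coincide with the $\reflection_z$-image of the time-reversal of $\eta_z^1$.  By \cite[Theorem 1.1]{MS_IMAG2}, the latter is an $\SLE_\kappa(\rho_1;\rho_2)$ from $\infty$ to $z$.  The GFF-determined marginal of $\eta_z^2$ depends affinely on $\alpha$ via the boundary-data formulas of Figure~\ref{fig::conditional_boundary_data}, so equating the two descriptions yields a linear equation in $\alpha$ whose unique solution is $\alpha = \lambda - 2\lambda'$, i.e., $c = -\lambda'$.

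The main obstacle will be the unconditional marginal identification in the sufficiency direction: while the conditional law of $\eta_z^2$ given $\eta_z^1$ is readily computed as an $\SLE_\kappa(\kappa-4;\rho_2)$ in the left component of $\h\setminus \eta_z^1$, showing that after integrating out $\eta_z^1$ the marginal becomes an $\SLE_\kappa(\rho_1;\rho_2)$ process in all of $\h$ requires using the GFF coupling of \cite{MS_IMAG} to re-express $\eta_z^2$ as the flow line of $h$ from $\infty$ with angle $\theta_L$ and then reading off its force-point weights from the boundary data. This reduces to the numerical identity $\kappa - 4 = 2\rho_1$, equivalently $\rho_1 = \tfrac{\kappa}{2}-2$, which holds precisely when $c = -\lambda'$; this is the origin of the critical boundary value in the statement.
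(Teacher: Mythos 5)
Your proposal is correct and follows essentially the same route as the paper: identify $\eta_z^1$ as an $\SLE_\kappa(\tfrac{\kappa}{2}-2;-\tfrac{\kappa}{2})$ and $\eta_z^2$ given $\eta_z^1$ as an $\SLE_\kappa(\kappa-4;-\tfrac{\kappa}{2})$, then combine the time-reversal symmetry of $\SLE_\kappa(\rho_1;\rho_2)$ from \cite[Theorem 1.1]{MS_IMAG2} with the conditional/marginal calculus of Figure~\ref{fig::monotonicity} to match the joint laws under $\reflection_z$, with the weights failing to match for $c \neq -\lambda'$. The only cosmetic difference is bookkeeping: the paper handles your ``main obstacle'' by first reversing $\eta_z^2$ conditionally on $\eta_z^1$, so that the pair $(\eta_z^1,\CR(\eta_z^2))$ becomes two flow lines emanating from $z$ and both the marginal of $\CR(\eta_z^2)$ and the conditional law of $\eta_z^1$ given it can be read off directly from Figure~\ref{fig::monotonicity}, rather than computing the marginal of $\eta_z^2$ from $\infty$ as you suggest.
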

\begin{proof}
We first suppose that $c=-\lambda'$.  By Figure~\ref{fig::conditional_boundary_data} (see also the beginning of the proof of Lemma~\ref{lem::reflecting_strip}), we know that $\eta_z^1 \sim \SLE_\kappa(\tfrac{\kappa}{2}-2;-\tfrac{\kappa}{2})$ and, conditionally on $\eta_z^1$, $\eta_z^2 \sim \SLE_\kappa(\kappa-4;-\tfrac{\kappa}{2})$ from $\infty$ to $z$ (the $\kappa-4$ force point lies between $\eta_z^2$ and $\eta_z^1$).  By the time-reversal symmetry of $\SLE_\kappa(\rho_1;\rho_2)$ processes \cite[Theorem~1.1]{MS_IMAG2}, this in turn implies that the law of the time-reversal $\CR(\eta_z^2)$ of $\eta_z^2$ given $\eta_z^1$ is an $\SLE_\kappa(-\tfrac{\kappa}{2};\kappa-4)$ process from $z$ to $\infty$ in the left connected component of $\h \setminus \eta_z^1$.  By Figure~\ref{fig::monotonicity}, this in turn implies that $\CR(\eta_z^2)$ is an $\SLE_\kappa(-\tfrac{\kappa}{2};\tfrac{\kappa}{2}-2)$ process from $z$ to $\infty$ in $\h$ and, moreover, the law of $\eta_z^1$ given $\CR(\eta_z^2)$ is an $\SLE_\kappa(\kappa-4;-\tfrac{\kappa}{2})$ process from $z$ to $\infty$ in the right connected component of $\h \setminus \eta_z^1$.  This proves the desired invariance of the law of $\{\eta_z^1 ,\eta_z^2\}$ under $\reflection_z$.  For $c \neq -\lambda'$, the fact that the law of the pair $\{\eta_z^1,\eta_z^2\}$ is not invariant under $\reflection_z$ follows from a similar argument (recall the values of the weights $\rho$ given in the beginning of the proof of Lemma~\ref{lem::reflecting_strip}).
\end{proof}

\begin{lemma}
\label{lem::reflecting_half_plane}
Suppose that $h$ is a GFF on $\h$ with constant boundary data $c$ with $c = -\lambda'$.  For each $z \in \R$, let $\eta_z^1$ be the flow line of $h$ starting at $z$ with angle $\theta_L$.  Conditionally on $\eta_z^1$, let $\eta_z^2$ be the flow line of $h$ in the left connected component of $\h \setminus \eta_z^1$ with angle $\theta_L$ from $\infty$ to $z$.  For any $z,w \in \R$ with $z < w$, the law of $\{\eta_z^1,\eta_z^2,\eta_w^1,\eta_w^2\}$ is invariant under $\reflection_{(z+w)/2}$ (up to time-reversal and reparameterization).
\end{lemma}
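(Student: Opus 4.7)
\textbf{Proof plan for Lemma~\ref{lem::reflecting_half_plane}.} The aim is to upgrade the single-point reflection invariance of Lemma~\ref{lem::reflecting_half_plane_simple} (invariance of $\{\eta_z^1,\eta_z^2\}$ under $\reflection_z$) to the joint invariance of two such pairs under reflection about the midpoint $(z+w)/2$. The approach is to describe the joint law of the four paths by iterated conditioning using the imaginary geometry framework, and then apply the reversibility of $\SLE_\kappa(\rho_1;\rho_2)$ from \cite[Theorem 1.1]{MS_IMAG2} to reshuffle the generating order so that the reflected configuration admits the same conditional description. Following the paper's remark that the proof ``avoids additional calculation,'' the intent is to reduce the weight bookkeeping to what has already been done in Lemma~\ref{lem::reflecting_half_plane_simple}.

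First, I would condition on $\eta_z^1$. Since $\eta_z^1$ and $\eta_w^1$ are both flow lines of $h+\theta_L\chi$ with the same angle $\theta_L$ and respective starting points $z<w$, by the monotonicity and merging principles of \cite[Theorem 1.5, Section 7.1]{MS_IMAG} they almost surely merge into a common trunk to $\infty$. Conditional on $\eta_z^1$, the law of $\eta_w^1$ is an $\SLE_\kappa(\underline\rho^L;\underline\rho^R)$ process in the right component of $\h\setminus\eta_z^1$ starting at $w$, where the weights are determined by the boundary data of $h$ (constant $-\lambda'$ on $\R$) together with the conditional boundary data $\lambda'+\chi\cdot\mathrm{winding}$ on the right side of $\eta_z^1$, as in Figure~\ref{fig::conditional_boundary_data}. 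Conditioning further on $\eta_w^1$, the two ``downward'' paths $\eta_z^2$ and $\eta_w^2$ are independent $\SLE_\kappa(\rho_1;\rho_2)$ processes from $\infty$ in the left components of $\h\setminus\eta_z^1$ and $\h\setminus\eta_w^1$ respectively (again via Figure~\ref{fig::conditional_boundary_data} and the angle-$\theta_L$ shift).

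Next, I would apply \cite[Theorem 1.1]{MS_IMAG2} to time-reverse the downward flow lines (and, if needed, a portion of $\eta_w^1$ after the merge), obtaining an equivalent description in which all four paths are expressed as $\SLE_\kappa(\rho_1;\rho_2)$ processes emanating from $z$ or $w$. The reflected configuration $\reflection_{(z+w)/2}(\{\eta_z^1,\eta_z^2,\eta_w^1,\eta_w^2\})$ admits an analogous conditional description in which the two pairs are interchanged (since $\reflection_{(z+w)/2}$ swaps $z\leftrightarrow w$) and, within each pair, the roles of $\eta^1$ and $\eta^2$ are swapped --- this is exactly the phenomenon already established by Lemma~\ref{lem::reflecting_half_plane_simple}. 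The joint invariance then reduces to the statement that the $\rho$-weights appearing in these two descriptions match. Because the choice $c=-\lambda'$ is precisely the one for which the single-pair weights $\tfrac{\kappa}{2}-2,\ -\tfrac{\kappa}{2},\ \kappa-4$ in Lemma~\ref{lem::reflecting_half_plane_simple} are invariant under the relevant left/right swap, I expect the matching to be automatic once the conditional structure is organized correctly.

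The main obstacle is the bookkeeping: the conditional boundary data after conditioning on $\eta_z^1$ is not globally constant, so Lemma~\ref{lem::reflecting_half_plane_simple} does not apply verbatim to the conditional $w$-pair. The cleanest resolution is to condition instead on the \emph{merged common trunk} of $\eta_z^1$ and $\eta_w^1$ (or on both $\eta_z^1$ and $\eta_w^1$ simultaneously), whereby the residual configuration decomposes into complementary components in each of which the conditional law of the relevant bead sequence is, after an appropriate conformal change of coordinates, of the form considered in Lemma~\ref{lem::reflecting_half_plane_simple}. With this decomposition, the symmetry under $\reflection_{(z+w)/2}$ is inherited from the symmetry of the trunk (from the common merging structure, which is symmetric under swapping the two lower arms by \cite[Theorem 1.1]{MS_IMAG2}) together with two independent applications of Lemma~\ref{lem::reflecting_half_plane_simple} inside the respective bead regions.
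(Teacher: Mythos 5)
Your plan does not reach the actual content of the lemma, and it differs in an essential way from the paper's argument. The paper does \emph{not} attempt to compute the joint conditional law of the four paths. Instead it introduces a resampling Markov kernel $\CK$ that, based on a fair coin toss, resamples $T_0=\eta_0^1\cup\eta_0^2$ and $T_1=\eta_1^1\cup\eta_1^2$ in one of the two orders from their conditional laws (using the independence of $T_a$ from the conditional bead sequence $S_{1-a}$ inside the pocket containing $1-a$). Both the true law of $T_0\cup T_1$ and its image under $\reflection_{1/2}$ are stationary for $\CK$ (the latter because $\CK$ itself is $\reflection_{1/2}$-symmetric, by Lemma~\ref{lem::reflecting_half_plane_simple}); running the two chains with the same randomness, they coalesce at the a.s.\ finite first time the freshly resampled $T_0$ and $T_1$ share a first common bead straddling $1$. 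Coalescence forces the two stationary laws to agree. This uniqueness-of-stationary-law step is the entire point of the lemma, and your proposal has no substitute for it.

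Concretely, the gaps in your route are these. First, the claim that, given $\eta_z^1$ and $\eta_w^1$, the paths $\eta_z^2$ and $\eta_w^2$ are independent is false: they are flow lines of the same field with the same angle $\theta_L$ emanating from $\infty$, so they merge and are strongly correlated; likewise $\eta_w^1$ started inside a pocket of $\eta_z^1$ merges into $\eta_z^1$, so the conditional weights are not those of a clean two-force-point process. Second, the suggestion to condition on the ``merged common trunk'' and inherit the symmetry from it is circular: $\reflection_{(z+w)/2}$ does not map the upward trunk to itself but interchanges the roles of the $\eta^1$'s and the (time-reversed) $\eta^2$'s, so the reflection invariance of the merged structure is essentially equivalent to the statement being proved. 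Theorem 1.1 of \cite{MS_IMAG2} gives reversibility of a single $\SLE_\kappa(\rho_1;\rho_2)$ strand (which is what powers Lemma~\ref{lem::reflecting_half_plane_simple}), but it does not deliver the joint reflection symmetry of two merging bead sequences; asserting that the weight-matching is ``automatic'' skips exactly the difficulty that the resampling-and-coalescence argument was designed to avoid.
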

\begin{proof}
By rescaling and translating, we may assume without loss of generality that $z=0$ and $w=1$.  For $a \in \{0,1\}$, we let $T_a = \eta_a^1 \cup \eta_a^2$.  We first observe that $T_a$ is independent of $S_{1-a}$ for $a \in \{0,1\}$ (where we recall that $S_a$ and $U_a$ are defined in the proof of Lemma~\ref{lem::reflecting_strip}).  Moreover, $T_a$ and $S_{1-a}$ together determine the paths $(\eta_0^1,\eta_0^2,\eta_1^1,\eta_1^2)$.  This means that we can resample $T_a$ from its original law to obtain $\wt{T}_a$ and then the pair $(\wt{T}_a,S_{1-a})$ determines paths $(\wt{\eta}_0^1,\wt{\eta}_0^2,\wt{\eta}_1^1,\wt{\eta}_1^2)$.  These paths will in general be distinct from $(\eta_0^1,\eta_0^2,\eta_1^1,\eta_1^2)$, however $S_{1-a}$ is fixed under this operation.  (Recall that $S_{1-a}$ is defined in terms of the conformal image of $(\eta_{1-a}^1,\eta_{1-a}^2)$ in $U_{1-a}$ and not just the paths themselves.)  From Lemma~\ref{lem::reflecting_half_plane_simple}, we know that the law of $T_a$ is invariant under $\reflection_a$.

We now consider the following rerandomization transition kernel $\CK$.
\begin{itemize}
\item We pick $j \in \{0,1\}$ with $\p[j=0] = \p[j=1] = \tfrac{1}{2}$.
\item We resample~$T_j$ from its original law to obtain $\wt{T}_j$.  As explained above, $(\wt{T}_j,S_{1-j})$ determines a quadruple of paths $(\wt{\eta}_0^1,\wt{\eta}_0^2,\wt{\eta}_1^1,\wt{\eta}_1^2)$ which has the same law as $(\eta_0^1,\eta_0^2,\eta_1^1,\eta_1^2)$.  These paths in turn determine $(\wt{T}_0,\wt{T}_1,\wt{S}_0,\wt{S}_1)$ where $S_{1-j} = \wt{S}_{1-j}$.
\item We then resample~$\wt{T}_{1-j}$ from its original law to obtain $\wh{T}_{1-j}$.  Then $(\wh{T}_{1-j},\wt{S}_j)$ determines a quadruple of paths $(\wh{\eta}_0^1,\wh{\eta}_0^2,\wh{\eta}_1^1,\wh{\eta}_1^2)$ which has the same law as $(\wt{\eta}_0^1,\wt{\eta}_0^2,\wt{\eta}_1^1,\wt{\eta}_1^2)$.  These paths in turn determine $(\wh{T}_0,\wh{T}_1,\wh{S}_0,\wh{S}_1)$ where $\wh{S}_j = \wt{S}_j$.
\end{itemize}

Let $X_1 = T_0 \cup T_1$.  Clearly, the law of $X_1$ is invariant under $\CK$.  Let $Y_1$ be the image of $X_1$ under $\reflection_{1/2}$.  Since $\CK$ is itself symmetric under $\reflection_{1/2}$, the law of $Y_1$ is also invariant under $\CK$.  We inductively define $X_n$ and $Y_n$ by applying $\CK$ (using the same coin tosses and choices for new $T_0$ and $T_1$ values) to $X_{n-1}$ and $Y_{n-1}$.  Note that each $X_n$ (resp.\ $Y_n$) has the same law as $X_1$ (resp.\ $Y_1$).  Let $K$ be the first time for which, during the rerandomization, we start by resampling $T_0$ and find that the first bead $B$ which is contained in both $T_0$ and $T_1$ intersects both $(0,1)$ and $(1,\infty)$, as depicted in Figure~\ref{fig::half_plane_setup2}, and then we resample $T_1$.  We note that this happens with positive probability in each application of~$\CK$.  Clearly, $X_K = Y_K$ (since they have the same $S_0$ component after resampling $T_0$, and this remains true after the $T_1$ component is resampled for both).  Thus $X_n = Y_n$ for all $n \geq K$ and $K$ is almost surely finite.  Thus $X_1$ and $Y_1$ must indeed have the same law as desired.
\end{proof}

\begin{figure}[ht!]
\begin{center}
\includegraphics[scale=0.85]{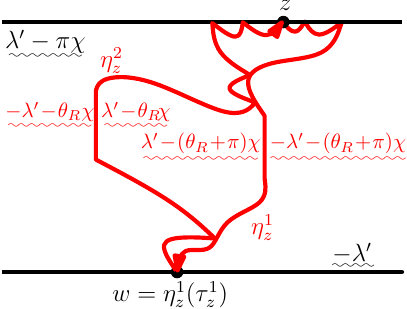}
\caption {\label{fig::strip_setup_left} By symmetry, Figure~\ref{fig::duality3} has an obvious ``upside down'' analog.  Let $h$ be a GFF on the infinite strip $\strip = \R \times (0,1)$ in $\C$ with the boundary data depicted above.  For $z \in \striptop$, we let $\eta_z^1$ be the flow line of $h$ with angle~$\theta_R$.  Then $\eta_z^1$ has to hit the lower boundary $\stripbot$ of $\strip$ (to see this, rotate the picture by $180$ degrees, apply~\eqref{eqn::ac_eq_rel}, and then Figure~\ref{fig::hittingrange}).  Let $w$ be the point where the path $\eta_z^1$ first hits $\stripbot$, say at time $\tau_z^1$.  Conditionally on $\eta_z^1([0,\tau_z^1])$, let $\eta_z^2$ be the flow line of $h$ starting at $w$ with angle $\theta_R$ in the left connected component of $\strip \setminus \eta_z^1([0,\tau_z^1])$.  Then $\eta_z^2$ almost surely exits $\striptop$ at $z$, say at time $\tau_z^2$ (see Figure~\ref{fig::hittingrange}).  In analogy with Lemma~\ref{lem::reflecting_strip}, we have that the joint law of the pair $T_z = \{\eta_z^1|_{[0,\tau_z^1]},\eta_z^2|_{[0,\tau_z^2]}\}$ is invariant under reflecting $\strip$ about the vertical line through $z$ (after time-reversal and reparameterization).}
\end{center}
\end{figure}

\begin{remark}
\label{rem::left_reflecting_strip}
We can define $T_z$ for $z \in \striptop$ analogously and we have a reflection invariance result which is analogous to Lemma~\ref{lem::reflecting_strip}.  This is described in Figure~\ref{fig::strip_setup_left}.
\end{remark}

\subsection{Iteration procedure exhausts curve}

In view of Lemma~\ref{lem::reduction} and Lemma~\ref{lem::reflecting_strip}, we can now complete the proof of Theorem~\ref{thm::reversible} and Theorem~\ref{thm::sle_kappa_rho_reversible}.

\begin{figure}[h!]
\begin{center}
\includegraphics[scale=0.85]{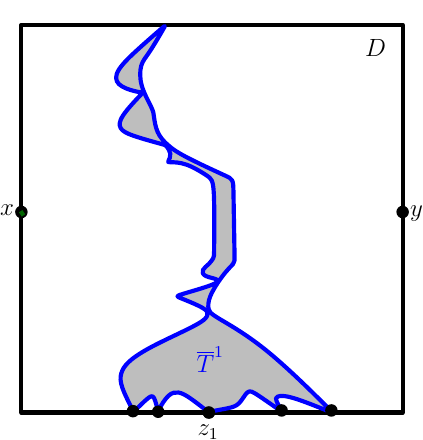}
\end{center}
\caption{\label{fig::iterative_split1}
The first step in the coupling procedure used in the proof of Theorem~\ref{thm::sle_kappa_rho_reversible} for $\rho_1=\rho_2=\tfrac{\kappa'}{2}-4$.  Let $(\eta_{1,-}',\eta_{1,+}')$ be independent $\SLE_{\kappa'}(\tfrac{\kappa'}{2}-4;\tfrac{\kappa'}{2}-4)$ curves in a bounded Jordan domain $D$ with $\eta_{1,-}'$ connecting $y$ with $x$ and $\eta_{1,+}'$ connecting $x$ to $y$, $x,y \in \partial D$ distinct.  Lemma~\ref{lem::reflecting_strip} implies that the law of the set $\ol{T}_{z_1}(\eta_{1,-}')$ which consists of the closure of the set of points which lie between the outer boundaries of $\eta_{1,-}'$ before and after hitting $z_1$ is equal in distribution to the corresponding set $\ol{T}_{z_1}(\eta_{1,+}')$ for $\eta_{1,+}'$.  Therefore we can construct a coupling of $\SLE_{\kappa'}(\tfrac{\kappa'}{2}-4;\tfrac{\kappa'}{2}-4)$ processes $(\eta_{2,-}',\eta_{2,+}')$ such that $\ol{T}^1 = \ol{T}_{z_1}(\eta_{2,-}') = \ol{T}_{z_1}(\eta_{2,+}')$.
}
\end{figure}

\begin{figure}[h!]
\begin{center}
\includegraphics[scale=0.85]{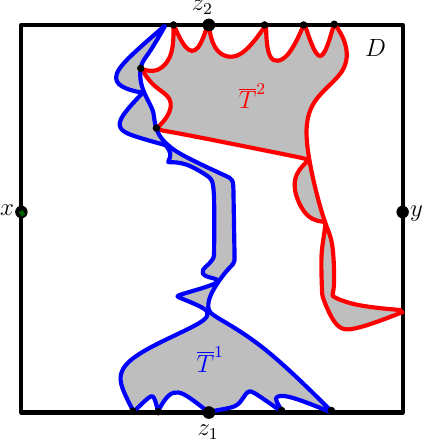}
\end{center}
\caption{\label{fig::iterative_split2}
(Continuation of Figure~\ref{fig::iterative_split1}).  Suppose that $z_2 \in \partial D \setminus \ol{T}^1$.  Then $\eta_{2,-}'$ and $\eta_{2,+}'$ are both $\SLE_{\kappa'}(\tfrac{\kappa'}{2}-4;\tfrac{\kappa'}{2}-4)$ processes in the connected component $C$ of $D \setminus \ol{T}^1$ which contains $z_2$ on its boundary.  Applying Lemma~\ref{lem::reflecting_strip} again thus implies that the law of the set $\ol{T}_{z_2}(\eta_{2,-}')$ which consists of the closure of the set of points which lie between the outer boundaries of $\eta_{2,-}'$ before and after hitting $z_2$ is equal in distribution to the corresponding set $\ol{T}_{z_2}(\eta_{2,+}')$ for $\eta_{2,+}'$.  Therefore we can construct a coupling of $(\eta_{3,-}',\eta_{3,+}')$ such that $\ol{T}^2 = \ol{T}_{z_2}(\eta_{3,-}') = \ol{T}_{z_2}(\eta_{3,+}')$.  The proof proceeds by successively coupling the boundary between the future and past of the two curves until one is almost surely the time-reversal of the other.
}
\end{figure}

\begin{proof}[Proof of Theorem~\ref{thm::sle_kappa_rho_reversible}]
Let $D \subseteq \C$ be a bounded Jordan domain and fix $x,y \in \partial D$ distinct.  We construct a sequence of couplings $(\eta_{k,-}',\eta_{k,+}')$ of $\SLE_{\kappa'}(\tfrac{\kappa'}{2}-4;\tfrac{\kappa'}{2}-4)$ curves on $D$ with $\eta_{k,-}'$ connecting $y$ to $x$ and $\eta_{k,+}'$ connecting $x$ to $y$ as follows.  We take $\eta_{1,-}'$ and $\eta_{1,+}'$ to be independent.  Let $\CD_1 = \{z_{n,1}\}$ be a countable, dense collection of points in $\partial D$ and let $z_1 = z_{1,1}$.  Lemma~\ref{lem::reflecting_strip} implies that the law of $\ol{T}_{z_1}(\eta_{1,-}')$, the closure of the set of points which lie between the outer boundaries of $\eta_{1,-}'$ before and after hitting $z_1$, is equal in law to the corresponding set $\ol{T}_{z_1}(\eta_{1,+}')$ for $\eta_{1,+}'$.  Consequently, there exists a coupling $(\eta_{2,-}',\eta_{2,+}')$ such that $\ol{T}^1 := \ol{T}_{z_1}(\eta_{2,-}') = \ol{T}_{z_1}(\eta_{2,+}')$.  We put $(\eta_{2,-}',\eta_{2,+}')$ onto a common probability space with $(\eta_{1,-}',\eta_{1,+}')$ and take $\eta_{2,-}' = \eta_{1,-}'$; we do not specify how $\eta_{2,+}'$ is coupled with $(\eta_{1,-}',\eta_{1,+}')$.  Note that the order in which $\eta_{2,-}' = \eta_{1,-}'$ visits the connected components of $D \setminus \ol{T}^1$ is the reverse of that of $\eta_{2,+}'$ \cite[\propFP]{MS_IMAG}.  Moreover, the conditional law of $\eta_{2,-}' = \eta_{1,-}'$ given $\ol{T}^1$ is independently an $\SLE_{\kappa'}(\tfrac{\kappa'}{2}-4;\tfrac{\kappa'}{2}-4)$ process in each of these components and likewise for $\eta_{2,+}'$ given $\ol{T}^1$ \cite[\propFP]{MS_IMAG}.

We will now explain how to iterate this procedure.  Let $(d_j)$ be a sequence that traverses $\N \times \N$ in diagonal order, i.e., $d_1 = (1,1)$, $d_2 = (2,1)$, $d_3 = (1,2)$, etc.  Suppose that $k \geq 2$.  We inductively take $\CD_k = \{z_{n,k}\}$ to be a countable, dense subset of $\partial \ol{T}^{k-1}$ and $z_k = z_{d_k}$.  Applying Lemma~\ref{lem::reflecting_strip} again, we know that $\ol{T}_{z_{k}}(\eta_{k,-}')$, the closure of the set of points which lie between the outer boundaries of the set of points visited by $\eta_{k,-}'$ before and after hitting $z_{k}$, is equal in law to $\ol{T}_{z_{k}}(\eta_{k,+}')$, the corresponding set for $\eta_{k,+}'$.  Thus by resampling $\eta_{k,+}'$ in the connected component of $D \setminus \cup_{j=1}^{k-1} \ol{T}^j$ with $z_k$ on its boundary (and leaving the curve otherwise fixed), we can construct a coupling $(\eta_{k+1,-}', \eta_{k+1,+}')$ such that $\ol{T}^{k} := \ol{T}_{z_{k}}(\eta_{k+1,-}') = \ol{T}_{z_{k}}(\eta_{k+1,+}')$ almost surely.  Then the conditional law of $\eta_{k+1,-}'$ and $\eta_{k+1,+}'$ in each of the complementary components of $D \setminus \cup_{j=1}^{k} \ol{T}^j$ is independently that of an $\SLE_{\kappa'}(\tfrac{\kappa'}{2}-4;\tfrac{\kappa'}{2}-4)$ process and $\eta_{k+1,-}'$ visits these connected components in the reverse order of $\eta_{k+1,+}'$ \cite[\propFP]{MS_IMAG}.  We assume that we have put $(\eta_{k,-}',\eta_{k,+}')$ for all $k$ onto a common probability space so that $\eta_{k,-}' = \eta_{1,-}'$ for all $k$.

\begin{figure}[!ht]
\begin{center}
\includegraphics[scale=0.85]{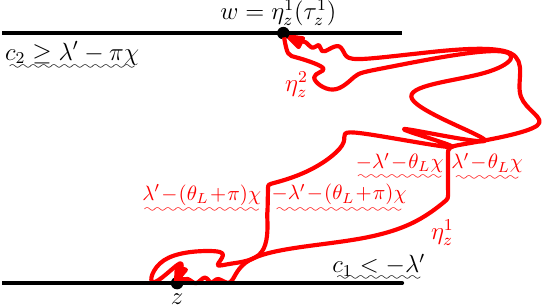}
\caption{\label{fig::non_reversible_strip} The above figure illustrates what happens in the setting of Figure~\ref{fig::duality3} when the constant upper and lower strip boundary data is modified so that the counter flow line $\eta'$ from $+\infty$ to $-\infty$ is still boundary filling, but at least one of the $\rho_i$ (say the one corresponding to the lower boundary) is strictly less than the critical value $\tfrac{\kappa'}{2}-4$.  As in Figure~\ref{fig::duality3}, the paths $\eta^1_z$ and $\eta^2_z$ describe the outer boundary of $\eta'$ before and after hitting $z$.  The law of the pair of paths in a neighborhood of $z$ is absolutely continuous with respect to the law that one would obtain if the upper strip boundary were removed, so that both paths go between $0$ and $\infty$ in $\h$ (and it is not hard to see that the local picture of the pair of paths converges to the half-plane picture upon properly rescaling).  In this case, the ``angle'' between the right path and $(0,\infty)$ is less than that between the left path and $(-\infty,0)$.  Since the opposite is true for the reflected pair of paths (about the vertical line through $z$), the time-reversal of a boundary-filling $\SLE_\kappa(\rho_1;\rho_2)$ can only be an $\SLE_\kappa(\rho_1'; \rho_2')$ (for some $\rho_1', \rho_2'$) if $\rho_1 = \rho_2 = \tfrac{\kappa'}{2}-4$ (recall also Lemma~\ref{lem::reflecting_half_plane_simple}).}
\end{center}
\end{figure}

To complete the proof, we will show that, up to reparameterization, the uniform distance between the time-reversal of $\eta_{k,+}'$ and $\eta_{k,-}'$ converges to $0$ almost surely.  Before we proceed to establish this, let us first explain why this suffices.  By the construction of the coupling, we have that $\eta_{k,-}' = \eta_{1,-}'$ for all $k$.  Consequently, the limit $\lim_{k \to \infty} \eta_{k,-}'$ trivially exists and the law of the limit is that of an $\SLE_{\kappa'}(\tfrac{\kappa'}{2}-4;\tfrac{\kappa'}{2}-4)$ process in $D$ from $y$ to $x$ (as this is the law of $\eta_{1,-}'$).  For each $k$, we also know that $\eta_{k,+}'$ has the law of an $\SLE_{\kappa'}(\tfrac{\kappa'}{2}-4; \tfrac{\kappa'}{2}-4)$ process in~$D$ from~$x$ to~$y$ (modulo time parameterization).  We will show below that the uniform distance between the time-reversal of $\eta_{k,+}'$ and $\eta_{k,-}'$ converges to~$0$ almost surely.  This, in particular, implies that the limit of $\eta_{k,+}'$ as $k \to \infty$ exists almost surely and has the law of an $\SLE_{\kappa'}(\tfrac{\kappa'}{2}-4;\tfrac{\kappa'}{2}-4)$ curve in~$D$ from $x$ to $y$ (modulo reparameterization) as each of the $\eta_{k,+}'$ also have this law.

Let $\CC_k$ be the collection of connected components of $D \setminus \cup_{j=1}^k \ol{T}^j$.  It suffices to show we almost surely have that $\lim_{k  \to \infty} \sup_{C \in \CC_k} \diam(C) = 0$ (the limit exists since $\sup_{C \in \CC_k} \diam(C)$ is decreasing in $k$).  Note that for $C \in \CC_k$ we have that the closure of $(\eta_{1,-}')^{-1}(C)$ is a closed interval; $(\eta_{1,-}')^{-1}(C)$ is itself not an interval since $\eta_{1,-}'$ makes a countable number of excursions from $\partial C$ as it fills it.  Moreover, since $\eta_{1,-}'$ is non-crossing it follows that if $C' \in \CC_k$ is distinct from $C$ then $(\eta_{1,-}')^{-1}(C) \cap (\eta_{1,-}')^{-1}(C') = \emptyset$.  For each $k$, we let
\[ \CI_k = \left\{ \ol{ (\eta_{1,-}')^{-1}(C)} : C \in \CC_k \right\}.\]
If $I,J \in \CI_k$ are distinct they may only intersect at their boundary points.  Moreover, for each $t$ there exists at most one $I \in \CI_k$ such that $t$ is contained in the interior of $I$.  By the continuity of $\eta_{1,-}'$, it suffices to show that we almost surely have 
\begin{equation}
\label{eqn::diam_to_zero}
\lim_{k \to \infty} \sup_{I \in \CC_k} |I| = 0
\end{equation}
where $|I|$ denotes the length of $I$.  We note that~\eqref{eqn::diam_to_zero} does not hold if and only if there exists $\epsilon > 0$ such that for each $k$ there exists $I_k \in \CI_k$ such that $|I_k| \geq \epsilon$.  Since there can only be a finite number of elements of $\CI_k$ with length at least~$\epsilon$, we may further assume that $I_{k+1} \subseteq I_k$ for all $k$.  Then we have that $\cap_k I_k$ has non-empty interior, so it follows that there exists $t$ which is contained in the interior of each $I_k$ (hence cannot be in any other interval $J \in \CI_k$).  That is, \eqref{eqn::diam_to_zero} does not hold if and only if there exists $\epsilon > 0$ and $t$ such that for each $k$ there exists $I_k \in \CI_k$ with $|I_k| \geq \epsilon$ such that $t$ is in the interior of $I_k$.

Assume that there is such an $\epsilon > 0$ and $t$.  Fix $k$ and let $[a,b] \in \CI_k$ be the interval such that $t \in (a,b)$.  Let $C = \eta_{1,-}'(I)$ and note that there exists $z \in (\CD_1 \cup \cdots \cup \CD_k) \cap \partial C$ such that
\[ |\eta_{1,-}'(a) - z| \geq \frac{1}{8}\diam(C) \quad\text{and}\quad |\eta_{1,-}'(b) - z| \geq \frac{1}{8}\diam(C)\]
since $(\CD_1 \cup \cdots \cup \CD_k) \cap \partial C$ is dense in $\partial C$.  Let $\omega$ be the modulus of continuity of $\eta_{1,-}'$.  This implies that, with $\xi = \inf\{t \in [a,b] : \eta_{1,-}'(t) = z\}$ (recall that $\eta_{1,-}'$ almost surely has to hit $z$ since it fills $\partial C$), we have both $\omega(|\xi-a|) \geq \tfrac{1}{8}\diam(C)$ and $\omega(|b - \xi|) \geq \tfrac{1}{8} \diam(C)$.  Using the notation from earlier in the proof, we have that $z = z_{j(k)}$ for some $j(k) > k$.  Let $[a',b'] = I_{j(k)}$.  By the construction, we have that $[a',b']$ is contained in either $[a,\xi]$ or $[\xi,b]$.  In the former case, we have
\begin{align*}
&|a' - b'|
    \leq  |a - b| - |b - \xi|\\
	\leq& |a-b| - \omega^{-1}(\tfrac{1}{8}\diam(C)).
\end{align*}
Similarly, in the latter case,
\begin{align*}
 & |a' - b'|
    \leq  |a - b| - |a - \xi|\\
	\leq& |a - b| - \omega^{-1}(\tfrac{1}{8}\diam(C)).
\end{align*}
This leads to a contradiction if $\lim_{k \to \infty} |I_k| > 0$.
\end{proof}

\subsection{Non-critical boundary-filling paths}

We now prove Theorem~\ref{thm::sle_kappa_rho_non_reversible}, which states that $\SLE_{\kappa'}(\rho_1;\rho_2)$ does not have time-reversal symmetry when $\min(\rho_1,\rho_2) < \tfrac{\kappa'}{2}-4$.

\begin{proof}[Proof of Theorem~\ref{thm::sle_kappa_rho_non_reversible}]
We may assume without loss of generality that $\rho_1 \in (-2,\tfrac{\kappa'}{2}-4)$.  This case is treated in Figure~\ref{fig::non_reversible_strip}, together with the discussion of the half-plane problem for general constant boundary values $c$ that was given in the proofs of Lemma~\ref{lem::reflecting_strip} and Lemma~\ref{lem::reflecting_half_plane_simple}.
\end{proof}

\section{Couplings}
\label{sec::couplings}

\begin{figure}[ht!]
\begin{center}
\includegraphics[scale=0.85]{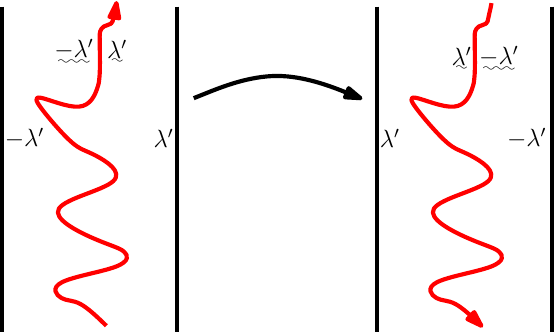}
\end{center}
\caption{\label{fig::simpleswap}
Consider a GFF $h$ on the infinite vertical strip $[-1,1] \times \R$ whose boundary values are depicted on the left side above.  The flow line $\eta$ of $h$ from the bottom to the top is an $\SLE_\kappa(\rho^L;\rho^R)$ process with $\rho^L = \rho^R = \tfrac{\kappa}{2}-2$.  To go from the left figure to the right figure, add the constant function $2\lambda'$ to left side of the strip minus the path and $-2\lambda'$ to the right side to obtain a new field $\wt h$ with the boundary conditions shown on the right.  By the reversibility of $\SLE_\kappa(\rho^L;\rho^R)$ processes \cite[Theorem~1.1]{MS_IMAG2}, $\wt h$ is a GFF with the boundary data indicated on the right side and the time-reversal of $\eta$ is a flow line of $\wt h$ from the top to the bottom.  Note that $h - \wt h$ is piecewise constant, equal to $-2\lambda'$ to the left of the path and $2 \lambda'$ to the right.  (We have not defined the difference on the almost surely zero-Lebesgue measure path $\eta$, but this does not effect the interpretation of this difference as a random distribution.)
}
\end{figure}

\begin{figure}[ht!]
\begin{center}
\includegraphics[scale=0.85]{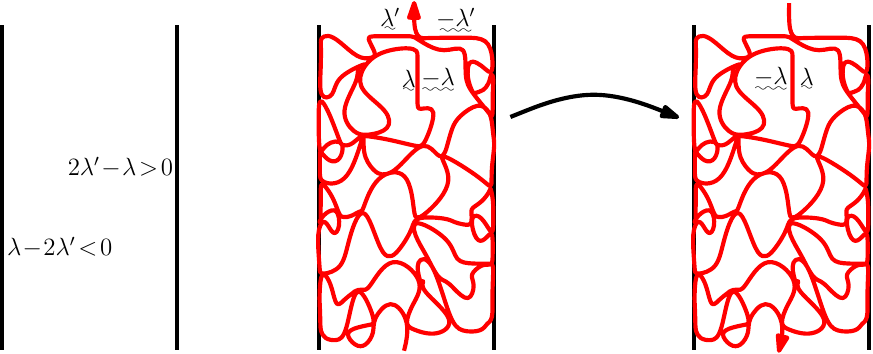}
\end{center}
\caption{\label{fig::simplecounterflowswap}
Consider a GFF $h$ on the infinite vertical strip $[-1,1] \times \R$ whose boundary values are depicted on the left above.  The counterflow line $\eta'$ of $h$ from the bottom to the top depicted in the middle above is an $\SLE_\kappa(\rho^L;\rho^R)$ process with $\rho^L = \rho^R = \tfrac{\kappa'}{2}-4$.  To see this, we note that $2\lambda'-\lambda = \lambda' -\tfrac{\pi}{2} \chi$ and $\lambda-2\lambda' = -\lambda' + \tfrac{\pi}{2}\chi$.  Consequently, if $\psi$ is the conformal map which rotates the strip $90$ degrees in the counterclockwise direction, the coordinate change formula~\eqref{eqn::ac_eq_rel} implies that the boundary conditions of the GFF $h \circ \psi^{-1} - \chi \arg (\psi^{-1})'$ agree with those of the GFF on the horizontal strip as depicted in Figure~\ref{fig::duality2}.   This is a path that is boundary filling but not space filling and it divides the strip into countably many regions that lie ``left'' of the path and countably many that lie ``right'' of the path.  As in Figure~\ref{fig::simpleswap}, we can go from the middle to the right figure by adding $-2\lambda$ to the left side of the path and $2\lambda$ to the right side of the path.  By the reversibility of $\SLE_\kappa(\rho^L;\rho^R)$ processes, the resulting field $\wt h$ is a GFF with the boundary data equal to $-1$ times the boundary data in the left figure, and the time-reversal of $\eta'$ is the flow line of $\wt h$ from the top to the bottom.  Note that $h - \wt h$ is piecewise constant, equal to $2\lambda$ on the left of the path and $-2 \lambda$ on the right.}
\end{figure}


One interesting aspect of time-reversal theory is that it allows us to couple two Gaussian free fields $h$ and $\wt h$ with different boundary conditions in such a way that their difference is almost surely piecewise harmonic.  In fact, for special choices of boundary conditions, one can arrange so that this difference is almost surely piecewise constant, with the boundary between constant regions given by an appropriate $\SLE_\kappa(\rho^L;\rho^R)$ curve.  We illustrate this principle for flow lines in Figure~\ref{fig::simpleswap}, which we recall from \cite{MS_IMAG2}.

The expectation field $E(z) := (\E(h-\wt h))(z)$ where $h$ and $\wt{h}$ are as in Figure~\ref{fig::simpleswap} is a linear function equal to $-2\lambda'$ on the left boundary of the strip and $2\lambda'$ on the right boundary.  If we write $P_L(z)$ for the probability that $z$ is to the left of the curve and $P_R(z) = 1-P_L(z)$ for the probability that $z$ is to the right, then $E(z) = -2\lambda'P_L(z) + 2\lambda' P_R(z)$, which implies that $P_L(z)$ is a linear function equal to $1$ on the left boundary of the strip and $0$ on the right boundary.

In light of the results of this paper, one can produce a variant of Figure~\ref{fig::simpleswap} involving counterflow lines when $\kappa \in (2,4)$ so that $\kappa' \in (4,8)$.  We illustrate this in Figure~\ref{fig::simplecounterflowswap}.  In this case, the expectation field $E(z) := (\E(h-\wt h))(z)$ is a linear function equal to $2(\lambda - 2\lambda') < 0$ on the left and $2(2\lambda' - \lambda) > 0$ on the right.  If we define $P_L(z)$ and $P_R(z)$ as above, then in this setting we have $E(z) = 2\lambda P_L(z) - 2\lambda P_R(z)$.  This implies that $P_L$ is a linear function equal to
$\frac{2(\lambda - 2\lambda') + 2\lambda}{4 \lambda} = 1 - \lambda'/\lambda = (4- \kappa)/4 \in (0, 1/2)$ on the left side and $1$ minus this value, which is $\kappa/4 \in (1/2,1)$, on the right side.

At first glance it is counterintuitive that points near the left boundary are more likely to be on the right side of the path.  This is the opposite of what we saw in Figure~\ref{fig::simpleswap}.  To get some intuition about this, consider the extreme case that $\kappa'$ and $\kappa$ are very close to $4$.  In this case, the $\rho^L$ and $\rho^R$ values in Figure~\ref{fig::simplecounterflowswap} are very close to $-2$, which means, intuitively, that when $\eta(t)$ is on the left side of the strip, it traces very closely along a long segment of the left boundary before (at some point) switching over to the right side and tracing a long segment of that boundary, etc.  Given this intuition, it is not so surprising that points near the left boundary are more likely to be to the right of the path.  When $\kappa'$ is close to $8$ (so that the counterflow line is close to being space filling) $P_L$ and $P_R$ are close to the constant function $\tfrac{1}{2}$.  Here the intuition is that the path is likely to get very near to any given point $z$, and once it gets near it has a roughly equal chance of passing $z$ to the left or right.

\bibliographystyle{hmralphaabbrv}
\bibliography{reversibility}

\bigskip

\filbreak
\begingroup
\small
\parindent=0pt

\bigskip
\vtop{
\hsize=5.3in
Microsoft Research\\
One Microsoft Way\\
Redmond, WA, USA }

\bigskip
\vtop{
\hsize=5.3in
Department of Mathematics\\
Massachusetts Institute of Technology\\
Cambridge, MA, USA } \endgroup \filbreak

\end{document}